\newtheorem{thm}{Theorem}
\newtheorem*{thm*}{Theorem}
\newtheorem*{defn}{Definition}
\newtheorem{prop}[thm]{Proposition}
\newtheorem{lemma}[thm]{Lemma}
\newcommand{\N}{\mathbb{N}}
\newcommand{\s}{\mathbb{S}}
\newcommand{\p}{\mathcal{P}}
\newcommand{\Aut}{\mathrm{Aut}}
\newtheorem{theorem}{Theorem}
\author{Colin Jahel}
\address{Université Paris Diderot, Institut de Mathématiques de Jussieu-Paris Rive Gauche}
\email{colin.jahel@imj-prg.fr}
\thanks{Research was partially supported by the ANR project AGRUME
(ANR-17-CE40-0026).}
\subjclass[2010]{Primary: 37B05 
; Secondary:  	22F50
, 03C15
, 43A07.   
}
\keywords{Unique ergodicity, ergodic decomposition, semigeneric directed graph.}
\date{January 2019}
\title[Unique ergo. of the automorphism group of the semigen. directed graph]{Unique ergodicity of the automorphism group of the semigeneric directed graph}
\begin{document}

\begin{abstract}
We prove that the automorphism group of the semigeneric directed graph (in the sense of Cherlin's classification) is uniquely ergodic.
\end{abstract}
\maketitle

\section{Introduction}

One key notion in the study of dynamical properties of Polish groups is amenability. A topological group is amenable when every flow, i.e. continuous action on a compact space, admits a Borel probability measure that is invariant under the action of the group.

In recent years, the study of non-locally compact Polish groups has exhibited several refinements of this phenomenon. One of them is extreme amenability: a topological group is extremely amenable when every flow admits a fixed point (see \cite{KPT}). Another one is unique ergodicity: a topological group is uniquely ergodic if every minimal flow, i.e. a flow where every orbit is dense, admits a unique Borel probability measure that is invariant under the action of the group. In this paper, all measures will be Borel probability measures.

Of course, extreme amenability implies unique ergodicity, but the converse is not true as for instance, every compact group is uniquely ergodic. Beyond compactness, though, no example is known in the locally compact Polish case and Weiss proves in \cite{Weiss} that there is no uniquely ergodic discrete group. In fact, it is suggested on page $5$ in \cite{AKL} that in the setting of locally compact groups, compactness is the only way to achieve unique ergodicity. However, some examples appear in the non-locally compact Polish case. The first of these examples was $S_\infty$, the group of all permutations of $\N$ equipped with the pointwise convergence topology (this was done by Glasner and Weiss in \cite{GW}). Angel, Kechris and Lyons then showed, using probabilistic combinatorial methods, that several groups of the form $\Aut(\mathbb{F})$, where $\mathbb{F}$ is a particular kind of countable structure called Fraïssé limit, are also uniquely ergodic (see \cite{AKL}).

A Fraïssé limit is a countable first-order homogeneous structure in the sense of model theory whose age, i.e. the set of its finite substructures up to isomorphism, is a Fraïssé class. A class $\mathcal{F}$ of finite structures is a Fraïssé class if it contains structures of arbitrarily large (finite) cardinality and satisfies the following:
\begin{itemize}
\item[(HP)]If $A \in \mathcal{F}$ and $B$ is a substructure of $A$, then $B\in \mathcal{F}$.
\item[(JEP)] If $A,B\in \mathcal{F}$ then there exists $C\in \mathcal{F}$ such that $A$ and $B$ can be embedded in $C$.
\item[(AP)]If $A, B, C \in \mathcal{F}$ and $f \colon A\rightarrow B$, $g \colon A\rightarrow C$ are embeddings, then there exists $D\in \mathcal{F}$ and $h \colon B\rightarrow D$, $l \colon C\rightarrow D$ embeddings such that $h\circ f = l \circ g$.
\end{itemize}

Examples of Fraïssé classes include the class of finite graphs, the class of finite graphs omitting a fixed clique, the class of finite $r$-uniform hypergraphs for any $r\in \N$. The unique ergodicity of the automorphism groups of the limits of those classes was proven in \cite{AKL}. 

The Fraïssé limit of a Fraïssé class is unique up to isomorphism. By definition, Fraïssé limits are homogeneous, i.e. any isomorphism between two finite parts of the structure can be extended in an automorphism of the structure. For more details on Fraïssé classes see \cite{Hodges}.

In \cite{SP}, using methods from \cite{AKL}, Pawliuk and Soki\'c extended the catalogue of uniquely ergodic automorphism groups with the automorphism groups of homogeneous directed graphs, which were all classified by Cherlin (see \cite{Cher}), leaving as an open question only the case of the semigeneric directed graph.

This graph, which we denote $\s$, is the Fraïssé limit of the class $\mathcal{S}$ of simple, loopless, directed, finite graphs that verify the following conditions:
\begin{itemize}
\item[$i)$] the relation $\perp$, defined by $x\perp y $ iff $\neg(x\rightarrow y \vee y \rightarrow x)$, is an equivalence relation,
\item[$ii)$] for any $x_1\neq x_2,y_1\neq y_2$ such that $x_1\perp x_2$ and $y_1\perp y_2$, the number of (directed) edges from $\{x_1,x_2\}$ to $\{y_1,y_2\}$ is even,
\end{itemize}
where $\rightarrow$ denotes the directed edge. We will refer to $\perp$-equivalence classes as columns and to the second condition as the parity condition. The $\perp$-class of an element $a\in \s$ will be referred to as $a^\perp$.

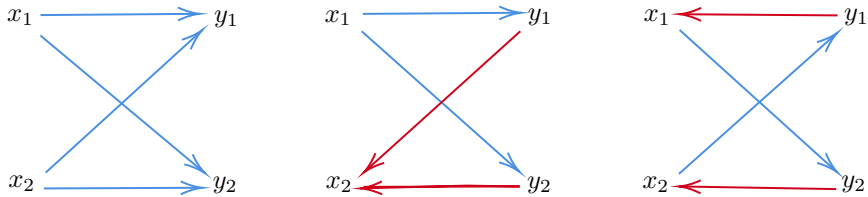
\begin{figure}[!h]
\tikzset{every picture/.style={line width=0.75pt}} 

\begin{tikzpicture}[x=0.75pt,y=0.75pt,yscale=-1,xscale=1]

\draw [color={rgb, 255:red, 74; green, 144; blue, 226 }  ,draw opacity=1 ]  (78.17,18.67) -- (156.43,18.05) ;
\draw [shift={(158.43,18.03)}, rotate = 539.55] [color={rgb, 255:red, 74; green, 144; blue, 226 }  ,draw opacity=1 ][line width=0.75]    (10.93,-3.29) .. controls (6.95,-1.4) and (3.31,-0.3) .. (0,0) .. controls (3.31,0.3) and (6.95,1.4) .. (10.93,3.29)   ;
\draw [color={rgb, 255:red, 74; green, 144; blue, 226 }  ,draw opacity=1 ]  (79.83,106.17) -- (157.83,105.84) ;
\draw [shift={(159.83,105.83)}, rotate = 539.76] [color={rgb, 255:red, 74; green, 144; blue, 226 }  ,draw opacity=1 ][line width=0.75]    (10.93,-3.29) .. controls (6.95,-1.4) and (3.31,-0.3) .. (0,0) .. controls (3.31,0.3) and (6.95,1.4) .. (10.93,3.29)   ;
\draw [color={rgb, 255:red, 74; green, 144; blue, 226}  ,draw opacity=1 ] (240.73,18.07) -- (318.5,17.51) ;
\draw [shift={(320.5,17.5)}, rotate = 539.5899999999999] [color={rgb, 255:red, 74; green, 144; blue, 226 }  ,draw opacity=1 ][line width=0.75]    (10.93,-3.29) .. controls (6.95,-1.4) and (3.31,-0.3) .. (0,0) .. controls (3.31,0.3) and (6.95,1.4) .. (10.93,3.29)   ;
\draw [color={rgb, 255:red, 208; green, 2; blue, 27 }  ,draw opacity=1 ]   (241.1,105.37) -- (319.83,105.17) -- (293.07,104.74) -- (241.5,106.11) ;
\draw [shift={(239.5,106.17)}, rotate = 358.48] [color={rgb, 255:red, 208; green, 2; blue, 27 }  ,draw opacity=1 ][line width=0.75]    (10.93,-3.29) .. controls (6.95,-1.4) and (3.31,-0.3) .. (0,0) .. controls (3.31,0.3) and (6.95,1.4) .. (10.93,3.29)   ;
\draw [color={rgb, 255:red, 208; green, 2; blue, 27 }  ,draw opacity=1 ]   (480.17,18.83) -- (402.43,18.25) ;
\draw [shift={(400.43,18.23)}, rotate = 360.43] [color={rgb, 255:red, 208; green, 2; blue, 27 }  ,draw opacity=1 ][line width=0.75]    (10.93,-3.29) .. controls (6.95,-1.4) and (3.31,-0.3) .. (0,0) .. controls (3.31,0.3) and (6.95,1.4) .. (10.93,3.29)   ;
\draw [color={rgb, 255:red, 74; green, 144; blue, 226 }  ,draw opacity=1 ] (78.17,29) -- (159.63,97.22) ;
\draw [shift={(161.17,98.5)}, rotate = 219.94] [color={rgb, 255:red, 74; green, 144; blue, 226 }  ,draw opacity=1 ][line width=0.75]    (10.93,-3.29) .. controls (6.95,-1.4) and (3.31,-0.3) .. (0,0) .. controls (3.31,0.3) and (6.95,1.4) .. (10.93,3.29)   ;
\draw [color={rgb, 255:red, 74; green, 144; blue, 226 }  ,draw opacity=1 ]  (80.83,98.17) -- (158.36,27.51) ;
\draw [shift={(159.83,26.17)}, rotate = 497.65] [color={rgb, 255:red, 74; green, 144; blue, 226 }  ,draw opacity=1 ][line width=0.75]    (10.93,-3.29) .. controls (6.95,-1.4) and (3.31,-0.3) .. (0,0) .. controls (3.31,0.3) and (6.95,1.4) .. (10.93,3.29)   ;
\draw[color={rgb, 255:red, 74; green, 144; blue, 226 }  ,draw opacity=1 ]   (240.17,26.83) -- (318.34,96.5) ;
\draw [shift={(319.83,97.83)}, rotate = 221.71] [color={rgb, 255:red, 74; green, 144; blue, 226 }  ,draw opacity=1 ][line width=0.75]    (10.93,-3.29) .. controls (6.95,-1.4) and (3.31,-0.3) .. (0,0) .. controls (3.31,0.3) and (6.95,1.4) .. (10.93,3.29)   ;
\draw [color={rgb, 255:red, 208; green, 2; blue, 27 }  ,draw opacity=1 ]   (320.17,26.83) -- (242.65,96.83) ;
\draw [shift={(241.17,98.17)}, rotate = 317.91999999999996] [color={rgb, 255:red, 208; green, 2; blue, 27 }  ,draw opacity=1 ][line width=0.75]    (10.93,-3.29) .. controls (6.95,-1.4) and (3.31,-0.3) .. (0,0) .. controls (3.31,0.3) and (6.95,1.4) .. (10.93,3.29)   ;
\draw [color={rgb, 255:red, 74; green, 144; blue, 226 }  ,draw opacity=1 ]  (400.17,98.83) -- (478.64,28.86) ;
\draw [shift={(480.13,27.53)}, rotate = 498.28] [color={rgb, 255:red, 74; green, 144; blue, 226 }  ,draw opacity=1 ][line width=0.75]    (10.93,-3.29) .. controls (6.95,-1.4) and (3.31,-0.3) .. (0,0) .. controls (3.31,0.3) and (6.95,1.4) .. (10.93,3.29)   ;
\draw [color={rgb, 255:red, 74; green, 144; blue, 226 }  ,draw opacity=1 ]   (400.5,26.17) -- (478.68,96.83) ;
\draw [shift={(480.17,98.17)}, rotate = 222.11] [color={rgb, 255:red, 74; green, 144; blue, 226 }  ,draw opacity=1 ][line width=0.75]    (10.93,-3.29) .. controls (6.95,-1.4) and (3.31,-0.3) .. (0,0) .. controls (3.31,0.3) and (6.95,1.4) .. (10.93,3.29)   ;
\draw [color={rgb, 255:red, 208; green, 2; blue, 27 }  ,draw opacity=1 ]   (479.5,106.5) -- (402.17,105.2) ;
\draw [shift={(400.17,105.17)}, rotate = 360.96000000000004] [color={rgb, 255:red, 208; green, 2; blue, 27 }  ,draw opacity=1 ][line width=0.75]    (10.93,-3.29) .. controls (6.95,-1.4) and (3.31,-0.3) .. (0,0) .. controls (3.31,0.3) and (6.95,1.4) .. (10.93,3.29)   ;

\draw (59.93,14.6) node [anchor=north west][inner sep=0.75pt]    {$x_{1}$};
\draw (220.14,14.73) node [anchor=north west][inner sep=0.75pt]    {$x_{1}$};
\draw (380.93,14.4) node [anchor=north west][inner sep=0.75pt]    {$x_{1}$};
\draw (60.6,98.27) node [anchor=north west][inner sep=0.75pt]    {$x_{2}$};
\draw (220.6,98.6) node [anchor=north west][inner sep=0.75pt]    {$x_{2}$};
\draw (380.27,98.6) node [anchor=north west][inner sep=0.75pt]    {$x_{2}$};
\draw (164.27,14.93) node [anchor=north west][inner sep=0.75pt]    {$y_{1}$};
\draw (322.43,14.1) node [anchor=north west][inner sep=0.75pt]    {$y_{1}$};
\draw (481.97,14) node [anchor=north west][inner sep=0.75pt]    {$y_{1}$};
\draw (163.6,98.6) node [anchor=north west][inner sep=0.75pt]    {$y_{2}$};
\draw (322.17,98.57) node [anchor=north west][inner sep=0.75pt]    {$y_{2}$};
\draw (480.6,98.27) node [anchor=north west][inner sep=0.75pt]    {$y_{2}$};
\end{tikzpicture}
\caption{The three possible configurations (up to isomorphism) of two pairs of equivalent points respecting the parity condition.}

\end{figure}


More details on this structure will be given in the next section.

In this paper, we prove:
\begin{thm}\label{thm1} The topological group $\Aut (\s)$ is uniquely ergodic.
\end{thm}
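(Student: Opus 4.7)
The plan is to follow the framework of Angel, Kechris and Lyons \cite{AKL}, used by Pawliuk and Soki\'c in \cite{SP} to settle the other cases from Cherlin's classification, and to carry out the counting step in a way that accommodates the parity condition characteristic of $\s$. First, I would identify $M(\Aut(\s))$ via the Kechris--Pestov--Todor\v cevi\'c correspondence: the natural Ramsey expansion class $\mathcal{S}^*$ is the class of pairs $(A,<)$ with $A\in \mathcal{S}$ and $<$ a linear order on $A$ that is convex on $\perp$-classes, which has the Ramsey and expansion properties over $\mathcal{S}$; the flow $X^*$ of convex linear orderings of $\s$ is then the universal minimal flow of $\Aut(\s)$. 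By the AKL criterion, unique ergodicity reduces to the \emph{quantitative expansion property}: for every $A\in \mathcal{S}$, every expansion $\bar A\in \mathcal{S}^*$ of $A$ and every $\epsilon>0$, there exists $B_0\in \mathcal{S}$ such that for every $B\supseteq B_0$ in $\mathcal{S}$ and every expansion $\bar B\in \mathcal{S}^*$ of $B$,
\[
\left|\frac{\#\{f\colon A\hookrightarrow B : \bar B\upharpoonright f(A)\cong \bar A\}}{\#\{f\colon A\hookrightarrow B\}} - \rho(\bar A)\right| < \epsilon,
\]
for a constant $\rho(\bar A)$ depending only on the isomorphism type of $\bar A$.

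To verify this I would decompose an embedding $f\colon A\hookrightarrow B$ into three pieces: (i) an assignment of the columns of $A$ to columns of $B$, compatible with the induced orders on the quotients; (ii) within each column of $A$, an order-preserving injection into the corresponding column of $B$; and (iii) the verification that the directed edges of $B$ between the chosen vertices realise the edge pattern of $\bar A$. Steps (i) and (ii) admit counting arguments parallel to those of \cite{SP}: using the Ramsey and expansion properties of $\mathcal{S}^*$, the relative frequencies of column-order types and of within-column-order types stabilise as $B$ grows, and their limiting values are computable from $\bar A$ alone. This reduces the problem to counting, among the assignments produced in (i)--(ii), the proportion that realise the prescribed edge pattern in (iii).

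The main obstacle is step (iii), and it is where the semigeneric case genuinely differs from its predecessors in Cherlin's list. The parity condition is equivalent to the fact that for any two distinct columns $X,Y$ of $B$ the edge matrix $M\in \mathbb F_2^{X\times Y}$ has the form $M_{x,y}=\alpha(x)+\beta(y)+c$ for some $\alpha\colon X\to \mathbb F_2$, $\beta\colon Y\to \mathbb F_2$ and $c\in \mathbb F_2$ depending on $(X,Y)$; this imposes an $\mathbb F_2$-affine constraint on the edge patterns of copies of $\bar A$ inside $\bar B$. The plan here is to attach to each pair of columns of $\bar A$ an $\mathbb F_2$-valued statistic computed from $\bar B$ via the parametrisation $(\alpha,\beta,c)$, and to show that as $B$ grows these statistics become asymptotically independent and uniformly distributed in $\mathbb F_2$ under the random choice of embedding supplied by (i)--(ii). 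Combining this equidistribution with the stabilised counts from (i)--(ii) yields the explicit value of $\rho(\bar A)$ and hence the quantitative expansion property, from which Theorem \ref{thm1} follows by the AKL criterion.
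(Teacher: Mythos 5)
Your proposal founders at its very first step: the universal minimal flow of $\Aut(\s)$ is \emph{not} the space of convex linear orderings of $\s$. The correct precompact Ramsey expansion, computed in \cite{JLNVTW}, is the class $\mathcal{S}^*$ in the language $(\to,<,R)$: in addition to a convex linear order one must adjoin the binary relation $R$ that canonically labels, for each pair of columns $P<Q$, the two $\sim_Q$-equivalence classes of $P$ (equivalently, the unary predicates $L_{i,f}$). The class of merely convexly ordered semigeneric digraphs does not have the Ramsey and expansion properties, and the flow of convex orderings is only a proper factor of $\mathrm{M}(\Aut(\s))$ (the forgetful map from $\mathcal{M}$ that drops $R$ is far from injective, as the $2^{\binom{n}{2}}$ choices of the $\varepsilon_i^j$ show). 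Establishing a unique invariant measure on a proper factor of the universal minimal flow does not give unique ergodicity of the group. Any correct execution of the Angel--Kechris--Lyons strategy must therefore count expansions by $(<,R)$ jointly, which changes both the target constant $\rho(\bar A)$ and the combinatorics of your steps (i)--(iii).

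Beyond this, the heart of your step (iii) --- the asserted asymptotic independence and equidistribution of the $\mathbb{F}_2$-valued statistics attached to pairs of columns under a random embedding into an \emph{arbitrary} large $B\in\mathcal{S}$ --- is exactly what is not proved, and it is precisely the point at which the quantitative-expansion-property method of \cite{AKL} and \cite{SP} stalled on this structure (which is why the semigeneric case was left open there). The paper deliberately takes a different route: it works directly with invariant measures on the correct flow $\mathcal{M}$ of $(<,R)$-expansions, uses homogeneity together with Lemma \ref{Lemma1} to show that the basic clopen sets $U_{(x_i)_{i=1}^n,(\varepsilon_i^j)}$ all lie in a single $G$-orbit of sets and hence carry equal measure, and then combines the ergodic decomposition of Theorem \ref{thm3} with Neumann's Lemma (Theorem \ref{Thm:Neu}) to prove that the within-column orderings are independent of the rest of the expansion, forcing every invariant measure to agree with the uniform measure $\mu_0$ on a generating family closed under the needed intersections. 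To salvage your approach you would need both to repair the expansion class and to supply an actual proof of the equidistribution claim; as written the argument does not go through.
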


The method we use is different from the one found in \cite{AKL} and \cite{SP} since we do not work with the so-called "quantitative expansion property", but rather show that an ergodic measure can only take certain values on a generating part of the Borel sets. It is also different from the approach in \cite{Tsan} (see Theorem $7.4$) which only applies when the structure eliminates imaginaries. Our method relies on the idea that if there are equivalence classes in a structure and the universal minimal flow is essentially the convex orderings regarding the equivalence classes, then the ordering inside the equivalence classes and the ordering of the equivalence classes are independent, provided that the automorphism group behaves well enough.
\\

\emph{Acknowledgements}:

I am grateful to my PhD supervisors Lionel Nguyen Van Thé and Todor Tsankov for their helpful advice during my research on this paper. I also want to thank Miodrag Soki\'c for his comments on this paper. I thank the referee, whose comments helped me greatly improve the structure of the paper.

\section{Preliminaries}

The starting point of our proof is common with that of \cite{AKL}: to prove that $\Aut(\s)$ is uniquely ergodic, it suffices to show that one particular action is uniquely ergodic, namely, its universal minimal flow, $\Aut(\s) \curvearrowright \mathrm{M}\left(\Aut(\s)\right)$. This is the unique minimal $\Aut(\s)$-flow that maps onto any minimal $\Aut(\s)$-flow (such a flow exists for any Hausdorff topological group by a classical result of Ellis, see \cite{Ellis}); an explicit description was made by Jasi\'nski, Laflamme,  Nguyen Van Th\'e and Woodrow in \cite{JLNVTW}. It is the space of expansions of $\s$ whose $\mathrm{Age}$ is a certain class $\mathcal{S}^*$.

Before describing this class, we give some more background on $\s$. Observe that the parity condition is equivalent to the fact for every $\mathbf{A}\in \mathcal{S}$ and two columns $P,Q$ in $\mathbf{A}\in \mathcal{S}$, we have for all $x,x'\in P$,
\[\left(
\forall y\in Q \ \left((x\to y)\Leftrightarrow(x'\to y) \right)
\right)
\text{ or }
\left(
\forall y\in Q \ \left((x\to y) \Leftrightarrow (y\to x')\right)
\right).\]

This remark allows us to define the equivalence relation $\sim_Q$ on $P$ as:
\[
x\sim_Q x' \Leftrightarrow \ \forall y\in Q \ (x\to y \Leftrightarrow x' \to y).
\]

Note that as a consequence of the parity condition, we get that in $\s$, 
\[ 
\forall y\in Q \ (x\to y \Leftrightarrow x' \to y) \Leftrightarrow  \ \exists y \in Q \quad (x\to y \text{ and } x' \to y).
\] 

We can now consider $P^0$ and $P^1$ the two $\sim_Q$ equivalence classes in $P$, and we have $P=P^0\sqcup P^1$. Note that each of these class could be empty. Similarly, we have  $Q=Q^0 \sqcup Q^1$, where $Q^0$ and $Q^1$ are $\sim_P$-equivalence classes. Note that at that stage, this labelling of these classes is arbitrary, which is crucial to the construction and understanding of $\mathcal{S}^*$ bellow. Indeed, the language of $\mathcal{S}^*$ has a binary relation $R$ which interpretation is mainly to give a proper labelling of those equivalence classes.

This description has an interesting consequence when we recall that there must be an edge between any two points of $P$ and $Q$. Denote $P^i\to Q^j$ to mean for all $ x\in P^i$ and $ y \in Q^j$, we have $ x\to y$. Then $P^i \to Q^j$, implies that $Q^j \to P^{1-i}, \ P^{1-i} \to Q^{1-j}$ and $ Q^{1-j} \to P^i.$ In particular, this means that for each $i\in \{0,1\}$, there is a unique $j\in \{0,1\}$ such that $P^i \to Q^j$.

\begin{figure}[!h]

\tikzset{every picture/.style={line width=0.75pt}} 

\begin{tikzpicture}[x=0.75pt,y=0.75pt,yscale=-1,xscale=1]

\draw [color={rgb, 255:red, 208; green, 2; blue, 27 }  ,draw opacity=1 ]   (402.14,38.36) -- (402.14,123.79) ;
[color={rgb, 255:red, 74; green, 144; blue, 226 }  ,draw opacity=1 ]  (402.71,139.79) -- (402.71,224.64) ;
\draw [color={rgb, 255:red, 208; green, 2; blue, 27 }  ,draw opacity=1 ]   (300.24,37.81) -- (300.24,123.52) ;
\draw  [color={rgb, 255:red, 74; green, 144; blue, 226 }  ,draw opacity=1 ] (299.95,139.88) -- (299.95,225.02) ;
\draw  [color={rgb, 255:red, 74; green, 144; blue, 226 }  ,draw opacity=1 ] (402.71,139.88) -- (402.71,225.02) ;
\draw    (385.57,174.36) -- (319.55,91.02) ;
\draw [shift={(318.31,89.45)}, rotate = 411.61] [color={rgb, 255:red, 0; green, 0; blue, 0 }  ][line width=0.75]    (10.93,-3.29) .. controls (6.95,-1.4) and (3.31,-0.3) .. (0,0) .. controls (3.31,0.3) and (6.95,1.4) .. (10.93,3.29)   ;
\draw    (317.95,72.52) -- (383.29,73.19) ;
\draw [shift={(385.29,73.21)}, rotate = 180.59] [color={rgb, 255:red, 0; green, 0; blue, 0 }  ][line width=0.75]    (10.93,-3.29) .. controls (6.95,-1.4) and (3.31,-0.3) .. (0,0) .. controls (3.31,0.3) and (6.95,1.4) .. (10.93,3.29)   ;
\draw    (316.99,191.7) -- (383.29,191.51) ;
\draw [shift={(385.29,191.5)}, rotate = 539.8299999999999] [color={rgb, 255:red, 0; green, 0; blue, 0 }  ][line width=0.75]    (10.93,-3.29) .. controls (6.95,-1.4) and (3.31,-0.3) .. (0,0) .. controls (3.31,0.3) and (6.95,1.4) .. (10.93,3.29)   ;
\draw    (385,89.21) -- (317.97,172.23) ;
\draw [shift={(316.71,173.79)}, rotate = 308.91999999999996] [color={rgb, 255:red, 0; green, 0; blue, 0 }  ][line width=0.75]    (10.93,-3.29) .. controls (6.95,-1.4) and (3.31,-0.3) .. (0,0) .. controls (3.31,0.3) and (6.95,1.4) .. (10.93,3.29)   ;

\draw (272.12,73.35) node [anchor=north west][inner sep=0.75pt]  [xscale=0.75,yscale=0.75]  {$P^{i}$};
\draw (264.93,175.69) node [anchor=north west][inner sep=0.75pt]  [xscale=0.75,yscale=0.75]  {$P^{1-i}$};
\draw (417.3,70.6) node [anchor=north west][inner sep=0.75pt]  [xscale=0.75,yscale=0.75]  {$Q^{j}$};
\draw (417.13,172.34) node [anchor=north west][inner sep=0.75pt]  [xscale=0.75,yscale=0.75]  {$Q^{1-j}$};

\end{tikzpicture}

\end{figure}

The class $\mathcal{S}^*$ is the class of finite structures in the language $L=(\rightarrow, <, R)$, verifying :
\begin{enumerate}
\item
 $\mathcal{S}^*_{|\rightarrow} = \mathcal{S}$, 
 
 \item $<$ is interpreted as a linear ordering convex with respect to the columns, i.e. the columns are intervals for the ordering. For two columns $P, Q$, we will therefore write $P<Q$ to mean that for all $x\in P,y\in Q$ we have $x<y$. 
 
\item 
For $\mathbf{A}^*\in \mathcal{S}^*$, the binary relation $R^{\mathbf{A}^*}$ verifies

\begin{itemize} 
\item[(a)] For all $x$ and $y\perp y'$, $R^{\mathbf{A}^*}(x,y) \Leftrightarrow R^{\mathbf{A}^*}(x,y')$.
\item[(b)] For $P,$ $Q$ any two columns of $\mathbf{A}^*$ with $P<Q$, there is a unique $\sim_Q$-equivalence class $u$ (possibly empty) in $P$ such that
\[
\forall x\in P,  y \in  Q \quad R^{\mathbf{A}^*}(x,y) \Leftrightarrow x\in u.
\] 
\item[(c)] For $P,$ $Q$ any two columns of $\mathbf{A}^*$ with $P<Q$, if there is $x_1\in P$ such that for all $y\in Q$, $R^{\mathbf{A}^*}(x_1,y)$, then
\[ \forall y\in Q, x \in P \quad R^{\mathbf{A}^*}(y,x) \Leftrightarrow y \to x_1.\]
And if there is no such $x_1$ then we have
\[ \forall y\in Q, x \in P \quad \neg R^{\mathbf{A}^*}(y,x).\]
\item[(d)]If $x\perp^{\mathbf{A}^*} y$, we have $\neg R^{\mathbf{A}^*}(x,y)$.
\end{itemize}

\end{enumerate}

Observe that in a structure $\mathbf{A}^*\in \mathcal{S}^*$, $R^{\mathbf{A}^*}$ gives us a proper labelling of the $\sim_Q$-equivalence classes in $P$ when $P<Q$. In particular, we can render the arbitrary decomposition $P=P^0\sqcup P^1$, $Q=Q^0\sqcup Q^1$ canonical by setting
\[x\in P^1 \Leftrightarrow (\forall y \in Q \quad R^{\mathbf{A}^*}(x,y)) \]
and 
\[y\in Q^1 \Leftrightarrow (\forall x \in P \quad R^{\mathbf{A}^*}(y,x)). \]




A remarkable property of this decomposition is that the edge relation is actually entirely defined by it. Indeed, take two columns $P,Q$ in $\mathbf{A}^*$ that we decompose as above in $P=P^0 \sqcup P^1$, $Q=Q^0\sqcup Q^1$. We know, by construction of $R$ on $Q$, that $Q^1 \to P^1$. As we observed before, this means that $P^1\to Q^0$, $P^0 \to Q^1$ and $Q^0 \to P^0$.

Another point of view on this expansion is given in \cite{JLNVTW}. Take $\mathbf{A}\in \mathcal{S}$ with $n$ columns $P_1,\ldots, P_n$ and an expansion $\mathbf{A}^*\in \mathcal{S}^*$. The expansion $\mathbf{A}^*$ is interdefinable with a structure $\mathbf{A}^{**}$ in the language $\{\to, <,L_{i,f}\}$ where $L_{i,f}$ is a unary predicate for all $i\in \{1,\ldots,n\}=[n]$ and $f\in 2^{[n]\backslash i}$. We have $\mathbf{A}^*_{|\to, <}= \mathbf{A}^{**}_{|\to, <}$. Assuming that $P_1<^{\mathbf{A}^*}\ldots <^{\mathbf{A}^*} P_n$, then we define
\[L^{\mathbf{A}^{**}}_{i,f}=\{x\in P_i \ \colon \ \forall j\in [n]\backslash i, y\in P_j \quad (f(j)=1 \Leftrightarrow  R^{\mathbf{A}^*}(x,y) \} .\]

Denote $\mathcal{M}\subset \{0,1\}^{\s^2} \times \{0,1\}^{\s^2} $ the space of expansions of $\s$ whose $\mathrm{Age}$ is exactly $\mathcal{S}^*$. We will denote $E=(<^E,R^E)$ the elements of $\mathcal{M}$, by identification with the structure that can be inferred from the expansion.
The result shown in \cite{JLNVTW} is:
\begin{theorem}\label{thm2}
The universal minimal flow of $\Aut(\s)$ is $ \Aut(\s)\curvearrowright \mathcal{M}$.
\end{theorem}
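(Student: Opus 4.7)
The plan is to invoke the Kechris--Pestov--Todor\v{c}evi\'c correspondence in the form extended by Nguyen Van Th\'e to precompact relational expansions: if $\mathcal{F}$ is a Fraïssé class with limit $\mathbb{F}$ and $\mathcal{F}^*$ is a reasonable precompact relational expansion of $\mathcal{F}$ that is itself Fraïssé, has the Ramsey property, and has the expansion property relative to $\mathcal{F}$, then the space of $\mathcal{F}^*$-expansions of $\mathbb{F}$ is the universal minimal flow of $\Aut(\mathbb{F})$. Here $\mathcal{F}=\mathcal{S}$, $\mathbb{F}=\s$, and $\mathcal{F}^*=\mathcal{S}^*$, so it suffices to check the four ingredients.

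First I would verify that $\mathcal{S}^*$ is a Fraïssé class. Heredity is immediate from the universal form of axioms (a)--(d). For joint embedding and amalgamation, I would start from the amalgamation property of $\mathcal{S}$ (which follows from Cherlin's analysis), then extend the convex linear order $<$ and the relation $R$ to the amalgam. The sensitive point is that on the columns shared between the two factors, $R$ already encodes the canonical decomposition $P=P^0\sqcup P^1$, and axioms (b)--(c) tie this decomposition to the edge relation through the parity condition; one checks that the rigidity provided by these axioms leaves exactly one consistent way to define $R$ on the new edges, and free choice on edges between entirely new columns. Precompactness is easy: a finite $\mathbf{A}\in\mathcal{S}$ admits only finitely many convex orderings, and by (a)--(d) the relation $R$ on $\mathbf{A}$ is determined, column by column, by a choice of a $\sim_Q$-equivalence class in each $P$, hence finitely many possibilities.

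The main obstacle is the Ramsey property: for every $\mathbf{A}^*,\mathbf{B}^*\in\mathcal{S}^*$ and $k\in\N$, one must produce $\mathbf{C}^*\in\mathcal{S}^*$ with $\mathbf{C}^*\arrows{(\mathbf{B}^*)}{\mathbf{A}^*}{k}$. Here I would switch to the interdefinable language $\{\to,<,L_{i,f}\}$ described at the end of the excerpt, which turns objects of $\mathcal{S}^*$ into linearly ordered structures whose unary predicates locally determine $R$, and through (b)--(c) also determine the inter-column edges. The natural approach is a Ne\v{s}et\v{r}il--R\"odl partite construction: one builds $\mathbf{C}^*$ inductively column by column. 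At each step one invokes the Ramsey property for finite linearly ordered sets enriched by unary predicates (a product of finite linear-order Ramsey, essentially Graham--Rothschild type) to control colorings of copies of $\mathbf{A}^*$ whose last column lies in the new column, and then amalgamates the partite pieces using the amalgamation of $\mathcal{S}^*$ established above. The delicate bookkeeping is that the colorings one wants to control concern copies that span several columns, whereas the inductive object is refined only one column at a time; this is handled as in standard partite constructions by iterating a ``partite lemma'' that glues together colorings across columns while preserving the parity condition.

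Finally, the expansion property: given $\mathbf{A}^*\in\mathcal{S}^*$, I would pick $\mathbf{B}\in\mathcal{S}$ whose number of columns and whose column sizes are very large compared to $\mathbf{A}^*$. Any $\mathcal{S}^*$-expansion of $\mathbf{B}$ then necessarily, by a pigeonhole on convex orderings of columns and another pigeonhole on the finitely many $L_{i,f}$-types realized in each column, contains a subconfiguration realizing every column order and every $R$-pattern of $\mathbf{A}^*$, so it embeds $\mathbf{A}^*$. With the four ingredients in hand, the theorem follows directly from the KPT--Nguyen Van Th\'e correspondence, and the difficulty of the whole argument is concentrated in the partite construction for the Ramsey step.
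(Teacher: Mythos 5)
You should first be aware that the paper does not prove this statement at all: it is quoted verbatim from Jasi\'nski--Laflamme--Nguyen Van Th\'e--Woodrow \cite{JLNVTW}, and the present paper only uses it as a black box. So the relevant comparison is with the proof in that reference, and your overall framework --- the Kechris--Pestov--Todor\v{c}evi\'c correspondence in Nguyen Van Th\'e's precompact-expansion form, reducing the theorem to: $\mathcal{S}^*$ is a reasonable precompact Fra\"iss\'e expansion of $\mathcal{S}$ with the Ramsey property and the expansion property --- is exactly the framework used there. That part of your plan is correct.

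However, as a proof your text has a genuine gap: every one of the four ingredients is asserted rather than established, and the one you correctly identify as carrying all the difficulty, the Ramsey property of $\mathcal{S}^*$, is only gestured at (``a Ne\v{s}et\v{r}il--R\"odl partite construction \dots handled as in standard partite constructions''). This is precisely the step where the work lies, and the sketch as given does not engage with the actual obstructions: the order $<$ is required to be \emph{convex} with respect to columns, so the relevant pigeonhole is not plain Graham--Rothschild on linear orders but a Ramsey statement for convexly ordered partitioned structures; and the parity condition together with axioms (b)--(c) means that the edge relation across columns is rigidly determined by the $L_{i,f}$-data, so one cannot freely amalgamate partite pieces --- one must check that the partite lemma's output still lies in $\mathcal{S}$. (In \cite{JLNVTW} this is handled not by a fresh partite construction but by exploiting the interdefinability with the language $\{\to,<,L_{i,f}\}$ to reduce to known Ramsey results for ordered structures with unary predicates, via Soki\'c-type product arguments.) Similarly, your amalgamation claim that the axioms ``leave exactly one consistent way to define $R$ on the new edges'' and your expansion-property argument by double pigeonhole are plausible but unverified, and the latter in particular needs care because the $R$-patterns one must realize are constrained by the edge relation through the parity condition, so not every pattern can be forced by counting alone. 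In short: right strategy, matching the source, but the proof is not there --- the Ramsey step in particular would need to be carried out in full (or correctly attributed) before this could be accepted.
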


We are interested in showing that the $\mathrm{Aut}(\s)$-invariant measures on $\mathcal{M}$ are all equal. A useful tool of measure theory is the following Lemma (see \cite{Gut} Theorem $3.5$)

\begin{lemma}\label{Lem:UniMeas}
Let $\mu$ and $\nu$ be two probability measures defined on a $\sigma$-field $\mathcal{E}$. If there is a family $(A_n)_{n\in \N}\in \mathcal{E}^\N$ stable under intersection that generates $\mathcal{E}$ and such that for all $n\in \N$, $\mu(A_n)=\nu(A_n)$, then $\mu=\nu$.
\end{lemma}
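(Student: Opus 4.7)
The plan is to invoke the monotone class theorem (Dynkin's $\pi$-$\lambda$ theorem) in its standard form. The family $\mathcal{P} = \{A_n : n \in \N\}$ is, by hypothesis, a $\pi$-system (stable under finite intersection), and the $\sigma$-field it generates is all of $\mathcal{E}$. The conclusion then reduces to showing that any $\pi$-system contained in the agreement class $\mathcal{D} = \{A \in \mathcal{E} : \mu(A) = \nu(A)\}$ forces $\mathcal{D} = \mathcal{E}$.

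First I would verify that $\mathcal{D}$ is a Dynkin system (also called a $\lambda$-system): the whole space lies in $\mathcal{D}$ because $\mu$ and $\nu$ are both probability measures; $\mathcal{D}$ is closed under proper set difference since both measures are finite and additive; and $\mathcal{D}$ is closed under countable increasing unions by the continuity from below of $\mu$ and $\nu$. Each of these is a one-line computation from the definition of $\mathcal{D}$ and the fact that $\mu(\Omega)=\nu(\Omega)=1$.

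Next I would apply Dynkin's $\pi$-$\lambda$ theorem: since $\mathcal{P} \subseteq \mathcal{D}$ is a $\pi$-system and $\mathcal{D}$ is a Dynkin system, the Dynkin system generated by $\mathcal{P}$ is contained in $\mathcal{D}$, and by Dynkin's theorem this Dynkin system coincides with the $\sigma$-algebra generated by $\mathcal{P}$, which is $\mathcal{E}$. Hence $\mathcal{E} \subseteq \mathcal{D}$, so $\mu(A)=\nu(A)$ for every $A \in \mathcal{E}$, giving $\mu = \nu$.

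There is no real obstacle here; the lemma is a textbook consequence of the $\pi$-$\lambda$ theorem, and the only thing to be careful about is that the hypothesis ``stable under intersection'' refers to finite intersections of elements of the generating family, which is exactly the $\pi$-system condition required by Dynkin's theorem. One could alternatively cite the version of the uniqueness of measures theorem stated directly for $\sigma$-finite measures agreeing on a generating $\pi$-system, specialized here to probability measures; the reference \cite{Gut} given in the statement indeed presents it in this packaged form.
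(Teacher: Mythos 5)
Your proof is correct and is exactly the standard $\pi$--$\lambda$ argument; the paper does not prove this lemma itself but simply cites Gut, Theorem~3.5, which packages precisely this uniqueness-of-measures statement. Nothing further is needed.
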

The rest of this section is devoted  to describing a family $\mathcal{P}$ of clopen sets that generate the Borel sets of $\mathcal{M}$. The sets of our family $\mathcal{P}$ are of the form


\begin{equation*}
U_{(x_i)_{i=1}^n, (\varepsilon_i^j)_{1\leq i <j \leq n}} \cap V_{(a^1_1,\ldots, a^1_{i_1}),\ldots,(a^k_1,\ldots, a^k_{i_k})}\subset \mathcal{M}.
\end{equation*}
They are defined as follows.

Let $(x_i)_{i=1}^n$ be in different columns. Let $(\varepsilon_i^j)_{i<j\leq n}\in \{0,1\}^{n\choose 2}$. An element $E=(<^E,R^E)\in \mathcal M$ belongs to $U_{(x_i)_{i=1}^n, (\varepsilon_i^j)_{1\leq i <j \leq n}}$ iff the following conditions are satisfied :
\begin{enumerate}
\item   $(x_1^\perp <^E\ldots <^E x_n^\perp)$
\item for $k<l$, 
\[
R^E(x_k,x_l) \Leftrightarrow (x_k \rightarrow x_l)^{\varepsilon_k^l}.
\]
where for all $x,y\in \s$ and $\varepsilon \in \{0,1\}$, $(x\rightarrow y)^\varepsilon$ means $(x\rightarrow y)$ if $\varepsilon=1$ and $\neg (x\rightarrow y)$ otherwise.
\end{enumerate}


The rest of $R$ on those columns can be recovered from this by construction of $\mathcal{S}^*$. Indeed, observe that for all $x\in x_k^\perp$, $y\in x_l^\perp$, we have
\begin{align*}
R^E(x,y) &\Leftrightarrow \left( \left( x\sim_{x_l^\perp}^{\s} x_k \text{ and } R^E(x_k,x_l)\right)
\text{ or }
\left( x\nsim_{x_l^\perp}^{\s} x_k \text{ and } \neg R^E(x_k,x_l)\right)
 \right)
\end{align*}
and
$$ R^E(y,x) \Leftrightarrow \left( \left(y\rightarrow x_k \text{ and } R^E(x_k,y)\right) \text{ or } \left(x_k\rightarrow y \text{ and } \neg R^E(x_k,y)\right)  \right).$$

An important remark is that if we have a different family $(x_1',\ldots,x_n')$ such that $x_i\perp x_i'$, then there is a family $(\alpha_i^j)_{1\leq i <j\leq n}$ such that
\[ U_{(x_i)_{i=1}^n, (\varepsilon_i^j)_{1\leq i <j \leq n}} =U_{(x_i')_{i=1}^n, (\alpha_i^j)_{1\leq i <j \leq n}} . \]
This can be achieved by taking $\alpha_i^j = \varepsilon_i^j$ if $x_i \sim_{x_j^\perp} x_i'$ and $\alpha_i^j = 1 - \varepsilon_i^j$ otherwise.

An additional remark that will be useful throughout the paper is that for a given family $(x_1,\ldots, x_n)$ of elements taken in different columns,
\[\mathcal{M} = \bigsqcup_{\sigma\in S_n, (\varepsilon_i^j)_{1\leq i < j \leq n}}U_{(x_{\sigma(i)})_{i=1}^n, (\varepsilon_i^j)_{1\leq i <j \leq n}}.
\]

We also define
\begin{align*}
V_{(a^1_1,\ldots, a^1_{i_1}) , \ldots, (a^k_1,\ldots, a^k_{i_k})} =  \{E&\in \mathcal{M}  : \\
 &(a^1_1<^E\cdots <^Ea^1_{i_1})\wedge\cdots\wedge (a^k_1<^E\cdots <^E a^k_{i_k}) \}
\end{align*}
where $(a_i^j\perp a_{i'}^{j'})$ iff $j=j'$.

This collection of sets is a generating family for the open sets of our space, so it is also a generating family for the Borel sets. 

To use Lemma \ref{Lem:UniMeas}, we would also need to know that this family is stable under intersection, unfortunately this is not the case. 
However, the intersection of two sets in $\mathcal{P}$ is actually a disjoint union of sets in $\p$. Therefore if we consider $\mathcal{P}'$ the collection of finite intersection of elements of $\p$, the evaluation of a measure on an element of $\p'$ is determined by the evaluation of the measure on $\p$. By Lemma \ref{Lem:UniMeas}, any measure is entirely characterized by its evaluation on elements of $\p'$, so it is characterized by its evaluation on elements of $\p$.

\section{Invariant measures}

From this point on, we denote $G=\Aut(\s)$. Let us first define $\mu_0$ a $G$-invariant probability measure on $\mathcal{M}$. We define $\mu_0$ by:

\begin{equation*}
\mu_0 \left(U_{(x_i)_{i=1}^n, (\varepsilon_i^j)_{1\leq i <j \leq n}} \cap V_{(a^1_1,\ldots, a^1_{i_1}),\ldots,(a^k_1,\ldots, a^k_{i_k})} \right) 
= \frac{1}{n! 2^{\binom{n}{2}}}\frac{1}{\displaystyle\prod_{j=1}^k i_j !}.
\end{equation*}
We call $\mu_0$ the uniform measure. It is proven in \cite{SP} that this measure is well-defined on all Borel sets and that it is $G$-invariant. We want to show that it is actually the only invariant measure. By Lemma \ref{Lem:UniMeas}, we only have to check that the invariant measures coincide on  $U_{(x_i)_{i=1}^n, (\varepsilon_i^j)_{1\leq i <j \leq n}} \cap V_{(a^1_1,\ldots, a^1_{i_1}),\ldots,(a^k_1,\ldots, a^k_{i_k})} $.

Before proving Theorem \ref{thm1} we need to prove the following preliminary results:

\begin{prop}\label{Prop1} For all $(x_i)_{i=1}^n$ such that $\neg (x_i \perp x_j)$ for $i\neq j$ and $(\varepsilon_i^j)_{i< j \leq n}\in 2^{n \choose 2}$, we have:
$$\mu\left(U_{(x_i)_{i=1}^n, (\varepsilon_i^j)_{1\leq i <j \leq n}}\right) = \frac{1}{n! 2^{\binom{n}{2}}}.$$
\end{prop}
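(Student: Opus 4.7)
The starting point is the partition
\[\mathcal{M}=\bigsqcup_{\sigma\in S_n,\,(\alpha_i^j)\in\{0,1\}^{\binom{n}{2}}}U_{(x_{\sigma(i)})_{i=1}^n,(\alpha_i^j)}\]
noted in the paper. Since $\mu(\mathcal{M})=1$ and there are exactly $n!\cdot 2^{\binom{n}{2}}$ pieces, the proposition reduces to showing that all of these pieces have the same $\mu$-measure, which is then forced to be $\frac{1}{n!\cdot 2^{\binom{n}{2}}}$. The plan is to establish this in two steps, each combining the $G$-invariance of $\mu$ with the observation (also noted in the paper) that for $y_i\in x_i^\perp$, one has $U_{(x_i),(\varepsilon_i^j)}=U_{(y_i),(\alpha_i^j)}$ where $\alpha_i^j=\varepsilon_i^j$ iff $y_i\sim_{x_j^\perp}x_i$.

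\emph{Step 1 (varying $\varepsilon$, fixed $\sigma$).} Fix $\sigma\in S_n$ and two tuples $(\varepsilon_i^j),(\eta_i^j)\in\{0,1\}^{\binom{n}{2}}$. I will exhibit $(y_i)_{i=1}^n$ in $\s$ with $y_i\in x_{\sigma(i)}^\perp$ such that: (i) $(y_1,\ldots,y_n)$ has the same quantifier-free type in $\s$ as $(x_{\sigma(1)},\ldots,x_{\sigma(n)})$, and (ii) for every $i<j$, $y_i\sim_{x_{\sigma(j)}^\perp}x_{\sigma(i)}$ iff $\varepsilon_i^j=\eta_i^j$. Condition (i) provides, by homogeneity of $\s$, an automorphism $g\in G$ with $g(x_{\sigma(i)})=y_i$, whence $G$-invariance yields $\mu(U_{(x_{\sigma(i)}),(\varepsilon_i^j)})=\mu(U_{(y_i),(\varepsilon_i^j)})$. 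Condition (ii), via the paper's observation, yields $U_{(y_i),(\varepsilon_i^j)}=U_{(x_{\sigma(i)}),(\eta_i^j)}$. Combining, $\mu(U_{(x_{\sigma(i)}),(\varepsilon_i^j)})=\mu(U_{(x_{\sigma(i)}),(\eta_i^j)})$.

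\emph{Step 2 (varying $\sigma$).} For $\sigma,\sigma'\in S_n$ and any $(\varepsilon_i^j)$, a similar construction produces $(y_i)$ with $y_i\in x_{\sigma'(i)}^\perp$ and the same quantifier-free type as $(x_{\sigma(i)})$. Homogeneity yields $g\in G$ with $g(x_{\sigma(i)})=y_i$; the observation then rewrites $U_{(y_i),(\varepsilon_i^j)}=U_{(x_{\sigma'(i)}),(\alpha_i^j)}$ for some $(\alpha_i^j)$. Step 1 removes the dependence on $(\alpha_i^j)$, giving $\mu(U_{(x_{\sigma(i)}),(\varepsilon_i^j)})=\mu(U_{(x_{\sigma'(i)}),(\eta_i^j)})$ for every $\sigma,\sigma',(\varepsilon_i^j),(\eta_i^j)$. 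Together with Step 1, all $n!\cdot 2^{\binom{n}{2}}$ pieces of the partition have the same $\mu$-measure, forcing each to equal $\frac{1}{n!\cdot 2^{\binom{n}{2}}}$.

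\emph{Main obstacle.} The essential difficulty in both steps is the existence in $\s$ of a tuple $(y_i)$ with the prescribed $\sim$- and edge-patterns. By the Fraïssé extension property, this reduces to showing that the $2n$-element substructure whose $\perp$-classes are the pairs $\{x_{\sigma(i)},y_i\}$ (respectively $\{x_{\sigma'(i)},y_i\}$ in Step 2), equipped with the prescribed edges, belongs to $\mathcal{S}$. Beyond the directed-graph axioms, the only condition to verify is the parity condition for each pair of such $\perp$-pairs. I expect that a short case analysis in the two possible values of $\varepsilon_i^j\oplus\eta_i^j$ and of the edge $x_{\sigma(i)}\to x_{\sigma(j)}$ will show that the number of directed edges between any two such $\perp$-pairs always lies in $\{0,2,4\}$. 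This parity bookkeeping is precisely where the structural analysis of the two $\sim$-classes $P=P^0\sqcup P^1$ from the preliminaries becomes indispensable.
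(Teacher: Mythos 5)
Your proposal is correct and follows essentially the same route as the paper: the same partition of $\mathcal{M}$ into the $n!\,2^{\binom{n}{2}}$ sets $U_{(x_{\sigma(i)}),(\alpha_i^j)}$, the same combination of $G$-invariance, homogeneity and the relabeling observation, and the same realization step --- your deferred parity verification is precisely the content of the paper's Lemma~\ref{Lemma1}, and it does go through, since in each pair of $\perp$-pairs exactly one cross-edge is left unprescribed and can be oriented to restore even parity. The only cosmetic difference is that the paper flips one coordinate $\varepsilon_k^l$ at a time and checks $R^{h\cdot E}$ by direct computation, whereas you change all coordinates simultaneously via the relabeling observation.
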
 
\begin{prop}\label{Prop2}For all $(a_1^1,\ldots, a_{i_1}^1,\ldots,a_1^k,\ldots, a_{i_k}^k)$ such that $a_i^j\perp a_{i'}^{j'}$ iff $j=j'$, we have:
$$\mu\left( V_{(a^1_1,\ldots, a^1_{i_1}),\ldots,(a^k_1,\ldots, a^k_{i_k})} \right) = \frac{1}{\displaystyle\prod_{j=1}^k i_j !}.$$

\end{prop}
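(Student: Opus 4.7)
My plan is to show that the $\prod_j i_j!$ sets of the form $V_{(a^1_{\sigma_1(1)},\ldots,a^1_{\sigma_1(i_1)}),\ldots,(a^k_{\sigma_k(1)},\ldots,a^k_{\sigma_k(i_k)})}$ for $\sigma=(\sigma_1,\ldots,\sigma_k)\in\prod_{j=1}^k S_{i_j}$ all have the same $\mu$-measure. Since $<^E$ is a strict linear order on the countable set $\s$, these $\prod_j i_j!$ sets partition $\mathcal{M}$ modulo a set of measure zero, so their measures sum to $1$ and each must equal $1/\prod_j i_j!$.

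To prove equi-measurability, I would use the $G$-invariance of $\mu$ in tandem with the homogeneity of $\s$. A direct computation gives $g\cdot V_{(a^j_l)}=V_{(g(a^j_l))}$ for every $g\in G$, so $\mu(V_{(a^j_l)})=\mu(V_{(a^j_{\sigma_j(l)})})$ as soon as there exists $g\in G$ realising the labeled map $a^j_l\mapsto a^j_{\sigma_j(l)}$. By Fra\"{\i}ss\'e's theorem, such $g$ exists precisely when this map is a partial isomorphism of $\s$, that is, when it preserves $\perp$ (which is automatic, since each $\sigma_j$ permutes within a column) and $\to$.

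Preservation of $\to$ fails exactly when two elements $a^{j_0}_l$ and $a^{j_0}_{\sigma_{j_0}(l)}$ lie in different $\sim_{(a^{j'}_m)^\perp}$-classes for some element $a^{j'}_m$ of the tuple in another column. I would reduce to the case of a single transposition within one column $j_0$, and handle it as follows: either the swap is a partial isomorphism (trivial case) or one compensates by also changing the images of some elements of the tuple lying in the obstructing columns, replacing them by suitably chosen elements of $\s$ in the appropriate $\sim$-classes. The Fra\"{\i}ss\'e property of $\mathcal S$ together with the parity condition guarantees the existence of such replacements in $\s$, since in the semigeneric graph every column contains infinitely many elements in each $\sim$-class relative to any finite configuration. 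Concatenating these partial isomorphisms yields a chain of labeled tuples connecting $(a^j_l)$ to $(a^j_{\sigma_j(l)})$, each step of which preserves $\mu$ by $G$-invariance.

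\textbf{Main obstacle.} The genuinely delicate step is the diagonal case of Figure~1, where the automorphism group of the substructure $\{a^j_l\}$ is a strict subgroup of $\prod_j S_{i_j}$: the compensating changes in other columns move the tuple to a different underlying subset of $\s$, so one needs several chain steps (passing through tuples with different underlying sets) before returning to a relabeling of the original substructure. Checking that every intermediate configuration lies in $\mathcal{S}$ (i.e.\ satisfies the parity condition) requires careful bookkeeping of the $\sim$-class assignments across all pairs of columns of the tuple, and this combinatorial verification is where the heart of the proof lies.
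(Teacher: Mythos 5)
Your reduction to equi-measurability of the $\prod_j i_j!$ sets $V_{(a^j_{\sigma_j(1)},\ldots,a^j_{\sigma_j(i_j)})_j}$ is the right target, and the single-column base case is fine. But the mechanism you propose for the multi-column case --- a chain of labeled tuples, each consecutive pair related by an automorphism of $\s$ --- provably cannot close the gap you yourself flag as the main obstacle. Each step of such a chain certifies that two tuples lie in the same $G$-orbit, and orbit equivalence is transitive; so any chain from $(a^j_l)$ to $(a^j_{\sigma_j(l)})$, no matter how many intermediate underlying sets it passes through, collapses to the assertion that these two tuples are $G$-conjugate, i.e.\ that the finite substructure on $\{a^j_l\}$ admits an automorphism inducing $\sigma$ while fixing the labels in the other columns. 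In the diagonal configuration of Figure 1 (say $a_1\to b_1$, $b_1\to a_2$, $a_2\to b_2$, $b_2\to a_1$, with $a_1\perp a_2$ and $b_1\perp b_2$) there is no automorphism swapping $a_1,a_2$ and fixing $b_1,b_2$, so $(a_1,a_2,b_1,b_2)$ and $(a_2,a_1,b_1,b_2)$ lie in different $G$-orbits and no $g\in G$, nor any chain of them, identifies $V_{(a_1,a_2),(b_1,b_2)}$ with $V_{(a_2,a_1),(b_1,b_2)}$. The most the group action yields here is $\mu\left(V_{(a_1,a_2),(b_1,b_2)}\right)=\mu\left(V_{(a_2,a_1),(b_2,b_1)}\right)$ via the double swap, i.e.\ $x+y=\tfrac12$ for the two unknown values; an invariant measure that correlates the orderings of the two columns (putting mass $\tfrac12$ on each of the two "aligned" order-types) is not excluded by any orbit argument, so the "careful bookkeeping" you anticipate cannot succeed.

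What is missing is precisely the independence of the ordering of one column from the orderings of the others, and this is where the paper does something genuinely different (and calls it the key argument of the whole article). It argues by induction on $k$: it conditions $\mu$ on each of the $\prod_{j=1}^{k-1} i_j!$ order-types $V_l$ of the first $k-1$ columns, pushes $\mu$ and the conditional measures $\mu_{V_l}$ forward to $\mathrm{LO}((a_1^k)^\perp)$, and shows that the pushforward $\nu$ of $\mu$ is ergodic for the subgroup $H$ stabilizing $(a_1^k)^\perp$ setwise and the marked points of the other columns pointwise; this uses Lemma \ref{Lem:ErgoInf}, which rests on Neumann's Lemma (Theorem \ref{Thm:Neu}) and the fact that $H$ has no finite orbits on the column. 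By Theorem \ref{thm3}, $\nu$ is then an extreme point of the $H$-invariant measures, so all the $\nu_{V_l}$ coincide with $\nu$, which is exactly the independence statement $\mu\left(V_{(a^k_1,\ldots,a^k_{i_k})}\cap V_l\right)=\frac{1}{i_k!}\,\mu(V_l)$ needed to finish the induction. Some such measure-theoretic input (ergodicity/extremality, or an equivalent quantitative independence argument) has to be added to your plan; it cannot be replaced by homogeneity and the Fraïssé property alone.
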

Similar results were proven in \cite{SP}. We will prove those results using different methods. The proof of Proposition \ref{Prop2} is very similar to what we will do later on in order to conclude and contains the key argument of this paper.

For proofs of Proposition \ref{Prop2} and Theorem \ref{thm1}, we will need an ergodic decomposition theorem, thus we need to define the notion of ergodicity.

\begin{defn}Let $\Gamma$ be a Polish group acting continuously on a compact space $X$. A $\Gamma$-invariant measure $\nu$ is said to be $\Gamma$-\emph{ergodic} if for all $A$ measurable such that 
$$ \forall g\in \Gamma \ \nu(A \triangle g\cdot A)=0,$$
we have $\nu(A)\in \{0,1\}$.
\end{defn}

We can now state the following (see \cite{Phelps} Proposition $12.4$):
\begin{theorem}\label{thm3}
Let $\Gamma$ be a Polish group acting continuously on a compact space $X$. Let $P(X)$ denote the space of probability measures on X and $P_\Gamma (X)=\{ \mu \in P(X) \ : \  \Gamma \cdot \mu = \mu \}$. Then, the extreme points of $P_\Gamma (X)$ are the $\Gamma$-ergodic invariant measures.
\end{theorem}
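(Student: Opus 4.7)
The plan is to prove both implications separately. The easy direction is that an extreme point of $P_\Gamma(X)$ must be $\Gamma$-ergodic: I argue the contrapositive. Suppose there is a measurable $A$ with $0<\mu(A)<1$ and $\mu(A\triangle gA)=0$ for every $g\in \Gamma$. Define
\[\mu_1(B)=\frac{\mu(B\cap A)}{\mu(A)},\qquad \mu_2(B)=\frac{\mu(B\cap A^c)}{\mu(A^c)}.\]
Using the mod-null invariance of $A$ and the invariance of $\mu$, a direct computation shows $\mu_1,\mu_2\in P_\Gamma(X)$; and $\mu = \mu(A)\,\mu_1 + \mu(A^c)\,\mu_2$ is a non-trivial convex combination since $\mu_1(A)=1\neq 0=\mu_2(A)$. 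Hence $\mu$ is not extreme.

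For the converse, assume $\mu$ is $\Gamma$-ergodic and that $\mu = t\mu_1 + (1-t)\mu_2$ with $\mu_1,\mu_2\in P_\Gamma(X)$ and $0<t<1$. From $t\mu_1\leq \mu$ we get $\mu_1\ll \mu$, so the Radon--Nikodym theorem yields a Borel $f\colon X\to [0,\infty)$ with $\mu_1 = f\cdot\mu$ and $\int f\,d\mu =1$. The crucial step is to show that $f$ is essentially $\Gamma$-invariant. For every $g\in\Gamma$ and every Borel $B$, invariance of $\mu_1$ and $\mu$ combine to give
\[\int_B f\,d\mu \;=\; \mu_1(B) \;=\; \mu_1(gB) \;=\; \int_{gB} f\,d\mu \;=\; \int_B f\circ g\,d\mu,\]
so $f\circ g = f$ holds $\mu$-a.e. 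Consequently, for each $c\geq 0$ the level set $A_c=\{f\leq c\}$ satisfies $\mu(gA_c\triangle A_c)=0$ for every $g\in\Gamma$. Ergodicity then forces $\mu(A_c)\in\{0,1\}$ for every $c$, hence $f$ is $\mu$-a.e. constant; the normalization $\int f\,d\mu=1$ pins the constant to $1$, giving $\mu_1=\mu$, and symmetrically $\mu_2=\mu$.

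The only subtlety worth flagging is that $\Gamma$ may be uncountable, so the $\mu$-null set on which $f\circ g\neq f$ depends on $g$ and one cannot in general produce a single $\Gamma$-invariant representative of $f$. However, the definition of ergodicity used here requires only $\mu(A\triangle gA)=0$ for each $g$ individually rather than a common null exceptional set, which is exactly what the Radon--Nikodym computation delivers. No separability reduction to a countable dense subgroup or disintegration argument is therefore needed, and this is where the main conceptual care lies.
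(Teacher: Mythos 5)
Your proof is correct. Note, however, that the paper does not prove this statement at all: it is quoted as a known result, with a citation to Phelps (Proposition 12.4 there), so there is no in-paper argument to compare yours against. What you give is the standard self-contained proof: the contrapositive via conditioning on an almost-invariant set for one direction, and Radon--Nikodym plus invariance of the density for the other. Both halves check out --- in particular, the identity $\int_{gB} f\,d\mu = \int_B f\circ g\,d\mu$ follows from the change of variables for the invariant measure $\mu$, and the deduction that $f\circ g = f$ $\mu$-a.e.\ from equality of integrals over all Borel sets is legitimate since $f\circ g$ is integrable by invariance. Your closing remark is the right one to make: the paper's definition of ergodicity only demands $\nu(A\triangle g\cdot A)=0$ for each $g$ separately, which is exactly what the per-$g$ null sets give you, so no reduction to a countable subgroup is needed. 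One could add a half-sentence on why $\mu(A_c)\in\{0,1\}$ for all $c$ forces $f$ to be a.e.\ constant (monotonicity of $c\mapsto\mu(A_c)$ and taking the infimum of the $c$ with $\mu(A_c)=1$), but this is routine.
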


We will also need to use Neumann's Lemma (see \cite{Cam}, Theorem $6.2$) :
\begin{theorem} \label{Thm:Neu} Let $H$ be a group acting on $\Omega$ with no finite orbit. Let $\Gamma$ and $\Delta$ be finite subsets of $\Omega$, then there is $h\in H$ such that $h\cdot \Gamma \cap \Delta = \varnothing.$
\end{theorem}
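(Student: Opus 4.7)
The plan is to recast the desired conclusion as the non-covering of $H$ by certain cosets, and then appeal to the classical covering lemma of B.H. Neumann.

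Enumerate $\Gamma = \{\gamma_1,\ldots,\gamma_n\}$ and $\Delta = \{\delta_1,\ldots,\delta_m\}$. Then $h\cdot\Gamma\cap\Delta\neq\varnothing$ holds if and only if $h\in\bigcup_{i,j}T_{ij}$, where
\[
T_{ij}\;:=\;\{h\in H : h\gamma_i=\delta_j\}.
\]
For each pair $(i,j)$, either $T_{ij}=\varnothing$ (when $\delta_j$ is not in the $H$-orbit of $\gamma_i$), or, choosing any witness $h_0\in T_{ij}$, I would check that $T_{ij}=h_0 H_{\gamma_i}$: indeed $h\gamma_i=\delta_j=h_0\gamma_i$ is equivalent to $h_0^{-1}h\in H_{\gamma_i}$. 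Thus each $T_{ij}$ is either empty or a left coset of the point stabilizer $H_{\gamma_i}$. By hypothesis, $H\cdot\gamma_i$ is infinite, so by orbit-stabilizer $[H:H_{\gamma_i}]=\infty$. Hence the ``bad'' set $\bigcup_{i,j}T_{ij}$ is contained in a finite union of left cosets of infinite-index subgroups of $H$.

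The conclusion now reduces to B.H. Neumann's coset covering lemma: \emph{a group cannot be written as the union of finitely many left cosets of subgroups each of infinite index.} Granting this lemma, the bad set is a proper subset of $H$, so an element $h$ with $h\cdot\Gamma\cap\Delta=\varnothing$ exists.

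For the covering lemma itself, I would proceed by strong induction on the number $n$ of cosets in a hypothetical minimal counterexample $H=\bigcup_{i=1}^n h_i K_i$ with every $[H:K_i]=\infty$. The case $n=1$ forces $K_1=H$, contradicting $[H:K_1]=\infty$. For the inductive step, if all $K_i$ coincide with one subgroup $K$, then the cover exhibits $[H:K]\leq n<\infty$, a contradiction. Otherwise, grouping the indices according to the subgroup that appears and choosing a coset of $K_n$ not exhibited by the indices where $K_i=K_n$, one can translate and intersect to obtain a covering of $K_n$ by strictly fewer cosets (of the intersections $K_n\cap K_j$) inside $K_n$, to which the inductive hypothesis should apply. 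The main obstacle is precisely the bookkeeping that guarantees the relevant intersection subgroups remain of infinite index in $K_n$ (or at least that the inductive hypothesis can still be invoked meaningfully); this is the heart of Neumann's classical argument, and in practice I would simply cite it (Theorem 6.2 in \cite{Cam}) rather than reproduce the full case analysis, since the novel content of the deduction lies entirely in the reduction carried out above.
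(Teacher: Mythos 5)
The paper does not actually prove this statement: it is quoted as a black box from \cite{Cam} (Theorem 6.2), so there is no internal proof to compare against. Your argument is correct and is the standard derivation of this separation lemma (due to P.~Neumann) from B.~H.~Neumann's coset-covering theorem: the set of ``bad'' $h$ is exactly $\bigcup_{i,j}T_{ij}$, each nonempty $T_{ij}$ is a left coset $h_0H_{\gamma_i}$ of a point stabilizer, the no-finite-orbit hypothesis gives $[H:H_{\gamma_i}]=\infty$ by orbit--stabilizer, and the covering theorem then shows the bad set cannot exhaust $H$. The one soft spot is your sketch of the covering theorem itself: the induction you outline (translating into $K_n$ and covering it by cosets of the intersections $K_n\cap K_j$) runs into precisely the index bookkeeping you acknowledge, whereas the cleaner classical induction is on the number of \emph{distinct} subgroups appearing --- if the listed cosets of one subgroup $K$ fail to cover $H$, pick a missed coset $yK$, note it is covered by the cosets of the other subgroups, and translate that covering to replace every coset of $K$, reducing the number of distinct subgroups by one. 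Since you explicitly cite the covering theorem rather than rely on this sketch, your proof stands; if anything it supplies more detail than the paper, which supplies none.
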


The remaining of the section will be divided in three subsection. One for the proof of Proposition \ref{Prop1}, one for the proof of Proposition \ref{Prop2} and finally one for the proof of Theorem \ref{thm1}.

\subsection{Proof of Proposition \ref{Prop1}}

For this proof, we will need the following technical lemma.
\begin{lemma}\label{Lemma1}
Let $k< n$, let $P_1, \ldots, P_n$ be different columns in $\s$ and  let $y_1\in P_1 ,\ldots,y_k \in  P_k$. Take a given family $\varepsilon_i^j \in \{0,1\}$ where $1\leq i<j \leq n$ and $k<j$. Then there exist $y_{k+1}\in P_{k+1},\ldots,y_n  \in P_n$ such that $(y_i \rightarrow y_j)$ iff $\varepsilon_i^j=1$ for all $i<j$ and $k<j$.
\end{lemma}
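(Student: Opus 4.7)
The plan is to construct $y_{k+1},\ldots,y_n$ one at a time, by induction on $j$ running from $k+1$ to $n$. At stage $j$ I will already have chosen $y_1\in P_1,\ldots,y_{j-1}\in P_{j-1}$ satisfying the required edge relations among themselves, and the task is to produce $y_j\in P_j$ with $y_i\to y_j\Leftrightarrow \varepsilon_i^j=1$ for every $i<j$.

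To find $y_j$, I would fix any $p\in P_j$ (possible since every column of $\s$ is infinite and $P_j$ is disjoint from $P_1,\ldots,P_{j-1}$). Let $\mathbf{A}=\{y_1,\ldots,y_{j-1},p\}$ be the induced substructure of $\s$, and define an abstract one-point extension $\mathbf{B}=\mathbf{A}\sqcup\{z\}$ in the language of $\mathcal{S}$ by declaring $z\perp p$, $\neg(z\perp y_i)$ for every $i<j$, and orienting the edge between $z$ and each $y_i$ so that $y_i\to z\Leftrightarrow \varepsilon_i^j=1$.

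The key point is to check that $\mathbf{B}\in\mathcal{S}$. That $\perp$ is an equivalence relation on $\mathbf{B}$ is immediate, because in $\mathbf{A}$ each $y_i$ is alone in its column and $z$ is declared $\perp$-equivalent only to $p$. For the parity condition, observe that the only $\perp$-class of $\mathbf{B}$ containing more than one element is the column of $p$, which now equals $\{p,z\}$; every other column contains a single point. The parity condition quantifies over two pairs in two distinct $\perp$-classes, so no second pair exists and the condition is vacuously satisfied. Hence $\mathbf{B}\in\mathcal{S}$, and the extension property of the Fraïssé limit $\s$ yields an embedding $\mathbf{B}\hookrightarrow \s$ that fixes $\mathbf{A}$ pointwise. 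Taking $y_j$ to be the image of $z$, one has $y_j\perp p$ (so $y_j\in P_j$) with the prescribed edges to each earlier $y_i$, which closes the induction.

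The only step with any real content is the parity check for $\mathbf{B}$, and the whole purpose of the inductive setup, in particular of introducing only a single auxiliary witness $p$ alongside $z$, is to ensure that at every stage exactly one column of $\mathbf{B}$ contains more than one element, keeping the parity condition vacuous. A more ambitious approach of building all of $y_{k+1},\ldots,y_n$ in one go would force one to juggle many simultaneous parity constraints across several columns, which is exactly the difficulty this one-at-a-time construction sidesteps.
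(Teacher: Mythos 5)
Your proof is correct, but it takes a genuinely different route from the paper's. The paper realizes all of $y_{k+1},\ldots,y_n$ in a single application of the extension property: it builds one abstract structure containing $y_1,\ldots,y_k$, anchor points $x_{k+1}\in P_{k+1},\ldots,x_n\in P_n$, and new points $y_{k+1}^A,\ldots,y_n^A$ with $y_j^A\perp x_j^A$, so that $n-k$ columns of the auxiliary structure contain two points each. That forces an actual verification of the parity condition between every pair of such columns, which the paper discharges by choosing the cross edges judiciously (for $k<i<j$, directing $x_i^A$ versus $y_j^A$ as $x_i^A$ versus $x_j^A$, and $x_j^A$ versus $y_i^A$ as $y_j^A$ versus $y_i^A$). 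Your one-point-at-a-time induction trades that single global construction for $n-k$ successive one-point extensions, each with exactly one two-point column, so the parity condition is vacuous at every stage, as you observe. What the inductive version buys is that the only nontrivial check disappears; what it costs is essentially nothing, just $n-k$ invocations of the extension property of the Fraïssé limit instead of one. Both arguments ultimately rest on the same two facts, membership of the auxiliary structure in $\mathcal{S}$ and the extension property of $\s$.
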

\begin{proof}
Take $x_{k+1}\in P_{k+1},\ldots,x_n\in P_n$. Consider the following structure 
$$\mathbf{A}=((y_1^A,\ldots,y_n^A,x^A_{k+1}, \ldots, x_n^A),\to^\mathbf{A})$$
where $(y_i^A \to^\mathbf{A} y_j^A) \Leftrightarrow (y_i\to y_j)$ if $i<j\leq k$, $(y_i^A \to^\mathbf{A} y_j^A) \iff (\varepsilon_i^j =1)$ if $1\leq i <j \leq n$ and $k<j$. We also have $x^A_i \perp^\mathbf{A} y_i^A$ for $i>k$ and $(x_i^A \to^\mathbf{A} x_j^A \Leftrightarrow x_i \to x_j)$ for $k<i<j$. 

We put edges between $x_i^A$ and $y_j^A$ in order for them to respect the parity condition. Remark that there is more than one way to do this, for instance one can ask that when $k<i<j$, $(x_i^A \to^\mathbf{A} y_j^A) \Leftrightarrow (x_i^A \to^\mathbf{A} x_j^A)$ and $(x_j^A \to^\mathbf{A} y_i^A) \Leftrightarrow (y_j^A \to^\mathbf{A} y_i^A)$. The remaining edges can be added arbitrarily because they concern columns with only one vertex.

We make sure that $\mathbf{A}\in \mathcal{S}$. Indeed, noting that since there is one point in the first $k$ columns, and two in the remaining ones, it suffices to check the parity condition in the last $n-k$ columns. Take $k<j<i\leq n$. We know that the edges between $x_i^A$ and $y_j^A$ and the edge between $x_i^A$ and $x_j^A$ go in the same direction. Similarly, the edge between $x_j^A$ and $y_i^A$ and the edge between $y_j^A$ and $y_i^A$ also go in the same direction. Therefore the parity condition must be respected.

Remark that $((y^A_1,\ldots,y^A_k,x^A_{k+1},\ldots,x^A_n),\to^\mathbf{A})$ and $((y_1,\ldots,y_k,x_{k+1},\ldots,x_n),\to^\s)$ are isomorphic, hence $\mathbf{A}$ embeds in $\s$ in a way that extends this isomorphism. The image of $(y_{k+1}^A,\ldots,y_n^A)$ is as wanted.

\end{proof}

The fundamental observation for the proof of Proposition \ref{Prop1} is that if we take $x_1,\ldots,x_n\in \s$ all in different columns,
\[
\overline{\Aut(\s)\cdot (<^*,R^*)}=\underset{\sigma \in S_n, \ (\varepsilon_i^j)_{1\leq i <j \leq n}}\bigsqcup U_{(x_{\sigma(i)})_{i=1}^n,(\varepsilon_i^j)_{1\leq i <j \leq n}}.
\]

We will show that for any two families $\varepsilon= \left(\varepsilon_i^j\right)_{i< j\leq n} $, $\alpha=\left( \alpha_i^j\right)_{i < j\leq n} $ and $\sigma \in S_n$ there is a $g\in G$ such that 

$$U_{(x_i)_{i=1}^n,\varepsilon}=  g\cdot U_{(x_{\sigma(i)})_{i=1}^n,\alpha }. $$

This means that all sets of this form have the same measure, hence we will have the result because there are $n! 2^{\binom{n}{2}}$ such sets.

First, we construct $g'\in G$ such that 
\begin{equation*}
g'\cdot U_{(x_{\sigma(i)})_{i=1}^n,\alpha} = U_{x_1,\ldots,x_n,\beta}
\end{equation*} 
for some $\beta=(\beta_i^j)_{1\leq i< j\leq n}$.

We  want to prove that there is $g'\in G$ such that $g'\cdot x_i \in (x_{\sigma(i)})^\perp$. By Lemma \ref{Lemma1}, there exists $x_1',\ldots, x_n' \in \mathbb{S}$ such that $x_{\sigma(i)} \perp x_{i}'$ and $x_i \to x_j$ iff $x_i' \to x_j'$. Remark that by construction, there is a partial automorphism $\tau$ that sends $x_{\sigma(i)}$ to $x_{i}'$.
By homogeneity, there is $g'$ an automorphism of $\s$ that extends $\tau$. We remark that 
\begin{equation*}
g' \cdot U_{(x_i)_{i=1}^n,\alpha} = U_{(x'_{\sigma(i)})_{i=1}^n,\alpha}
\end{equation*}
and as we observed before, $U_{(x'_{\sigma(i)})_{i=1}^n,\alpha}$ does not depend on $x_i'$, but on their columns. Thus, there exist a family $\beta=(\beta_i^j)_{1\leq i <j \leq n}$ such that 
\begin{equation*}
U_{(x_{\sigma(i)}')_{i=1}^n,\alpha} = U_{(x_i)_{i=1}^n,\beta}.
\end{equation*}

Next, we construct $h\in G$ such that 
$$U_{(x_i)_{i=1}^n,\varepsilon}=  h\cdot U_{(x_i)_{i=1}^n, \beta }. $$

Assume that there are $k<l$ such that $\beta_i^j=\varepsilon_i^j$ for all $(i,j) \neq (k,l)$ and $\beta_k^l \neq \varepsilon_k^l$. Remark that taking care of this case will be enough to prove the result : If $\alpha$ and $\beta$ disagree in more than one coordinate, iterating this process still allows to modify coordinates one at a time.

Let us take $x_k' \perp x_k$ such that for all $ i \in [n]\backslash \{k,l\}$, $x_k' \rightarrow x_i$ iff $x_k \rightarrow x_i$ and $x_k' \rightarrow x_l$ iff $x_l \rightarrow x_k$. This is possible using Lemma \ref{Lemma1} where $\{y_1,\ldots,y_{n-1}\}=\{x_1,\ldots,x_n\}\backslash\{x_k\}$ and $P_n=x_k^\perp$. We define $x_l'\perp x_l$ similarly.

We take $h\in G$ such that $h(x_i)=x_i$ for all $ i \in [n]\backslash \{k,l\}$, $h(x_k')=x_k$ and $h(x_l')=x_l$. By homogeneity, such a $h$ exists: indeed, by the parity condition, we have $(x_k\rightarrow x_l)\Leftrightarrow (x'_k \rightarrow x'_l)$. Let us prove that $h$ gives the result. 

Take $E\in U_{x_1,\ldots,x_n, \beta }$. We will prove that 
$$h\cdot E \in U_{(x_i)_{i=1}^n,\varepsilon}.$$ 
For all $i<j$ we want to prove that
$$ R^{h\cdot E}(x_i,x_j)  \Leftrightarrow (x_i \rightarrow x_j)^{\varepsilon_i^j},$$
and since 
$$R^{h\cdot E}(x_i,x_j)\Leftrightarrow R^E (h^{-1}( x_i) , h^{-1}( x_j) ),$$
we prove
$$R^E (h^{-1}( x_i) , h^{-1}( x_j) ) \Leftrightarrow (x_i \rightarrow x_j)^{\varepsilon_i^j}.$$

If $\{i,j\}\cap \{k,l\} = \varnothing$, the result is obvious.

If $j=k$ and $i<k$, we have:
\begin{align*}
R^{h\cdot E} (x_i,x_k) &\Leftrightarrow R^E(h^{-1}(x_i),h^{-1}(x_k)) \\
&\Leftrightarrow (x_i \rightarrow h^{-1}(x_k))^{\beta_i^k} \\
&\Leftrightarrow (x_i \rightarrow x_k')^{\beta_i^k} \\
&\Leftrightarrow (x_i \rightarrow x_k)^{\beta_i^k},
\end{align*}
and since $\beta_i^k=\varepsilon_i^k$, we have
$$R^{h\cdot E} (x_i,x_k) \Leftrightarrow (x_i \rightarrow x_k)^{\varepsilon_i^k}.$$

The other cases where $|\{i,j\}\cap \{k,l\}| =1$ are similar.

Finally, if $(i,j)=(k,l)$, we have:
\begin{align*}
R^{h\cdot E} (x_k,x_l) &\Leftrightarrow R^E(h^{-1}(x_k),h^{-1}(x_l)) \\
&\Leftrightarrow (x_k \rightarrow h^{-1}(x_l))^{\beta_k^l} \\
&\Leftrightarrow (x_k \rightarrow x_l')^{\beta_k^l} \\
&\Leftrightarrow (x_k \rightarrow x_l)^{\varepsilon_k^l}.
\end{align*}
The last equivalence is a direct consequence of the definition of $x_l'$ and the fact that $\beta_k^l=(1-\varepsilon_k^l)$.
\qed

\subsection{Proof of Proposition \ref{Prop2}}
We prove the result by induction on the number $k$ of columns.

By homogeneity, for any column $(a_1^j)^\perp$ and $\sigma \in S_{i_j}$ there exists $g\in G$ such that 
\[ g\cdot V_{(a^j_1,\ldots, a^j_{i_j})} = V_{(a^j_{\sigma(1)},\ldots, a^j_{\sigma(i_j)})},
\]
thus 
\[ \mu \left( V_{(a^j_1,\ldots, a^j_{i_j})} \right) = \frac{1}{i_j !}.\]
This proves the initial case. 

Let us now assume that for all $(a_1^1,\ldots, a_{i_1}^1,\ldots,a_1^{k-1},\ldots, a_{i_{k-1}}^{k-1})$ such that $a_i^j\perp a_{i'}^{j'}$ iff $j=j'$, we have
$$\mu\left( V_{(a^1_1,\ldots, a^1_{i_1}),\ldots,(a^{k-1}_1,\ldots, a^{k-1}_{i_{k-1}})} \right) = \frac{1}{\displaystyle\prod_{j=1}^{k-1} i_j !}.$$

We consider $(a_1^k,\ldots,a_{i_k}^k)$ all in the same column and not in any $(a_1^i)^\perp$ for $i<k$. Remark that 
\[ V_{(a^1_1,\ldots, a^1_{i_1}),\ldots,(a^{k}_1,\ldots, a^{k}_{i_{k}})}  = V_{(a^1_1,\ldots, a^1_{i_1}),\ldots,(a^{k-1}_1,\ldots, a^{k-1}_{i_{k-1}})} \cap V_{(a_1^k,\ldots,a_{i_k}^k)}.\]
We want to prove that the ordering of $(a_1^k)^\perp$ is independent from the ordering of the other columns.

 Enumerate as $(V_1,\ldots , V_\tau)$ all the different sets of the form $V_{(a^1_{\sigma_1(1)},\ldots, a^1_{\sigma_1(i_1)}),\ldots,(a^{k-1}_{\sigma_{k-1}(1)},\ldots, a^{k-1}_{\sigma_{k-1}({i_{k-1}})})}$ where $\sigma_j$ is a permutation of $\{1,\ldots,i_j\}$. Thus $\tau = \displaystyle \prod_{j=1}^{k-1} i_j !$. 

For all $l\in \{1,\ldots, \tau \}$, we define
$$\mu_{V_l} (\cdot) = \frac{\mu(\cdot \cap V_l)}{\mu(V_l)}.$$
This is the conditional probability of $\mu$ given $V_l$. We remark that:
$$\mu = \sum_{l=1}^\tau \mu(V_l) \mu_{V_l}. $$

Denote $\mathrm{LO}((a_1^k)^\perp )$ the space of linear orderings on $(a_1^k)^\perp$. There is a restriction map $r$ from $\mathcal{M}$ to $\mathrm{LO}((a_1^k)^\perp )$. We denote $V^r_{(a_1^k,\ldots,a_{i_k}^k)}$ the image of $V_{(a_1^k,\ldots,a_{i_k}^k)}$ by $r$.
Let $\nu$ be, the pushforward of $\mu$ on $\mathrm{LO}({a_1^1}^\perp$) by $r$, and let $\nu_{V_l}$ be the pushforward of $\mu_{V_l}$ by the same map. We have:
$$\nu = \sum_{l=1}^\tau \mu(V_l) \nu_{V_l}. $$
Observe that the initial step of the induction implies that $\nu$ is the uniform measure on $\mathrm{LO}((a_1^k)^\perp )$

We denote $\mathrm{Stab}^{\text{set}}_{(a_1^k)^\perp}$ the setwise stabilizer of $(a_1^k)^\perp$, $\mathrm{Stab}^{\text{pw}}_{(a^1_1,\ldots,a^1_{i_1},\ldots ,a^{k-1}_1,\ldots,a^{k-1}_{i_{k-1}})}$ the pointwise stabilizer of $(a^1_1,\ldots,a^1_{i_1},\ldots ,a^{k-1}_1,\ldots,a^{k-1}_{i_{k-1}})$ and set $H=\mathrm{Stab}^{\text{set}}_{(a_1^k)^\perp} \cap \mathrm{Stab}^{\text{pw}}_{(a^1_1,\ldots,a^1_{i_1},\ldots ,a^{k-1}_1,\ldots,a^{k-1}_{i_{k-1}})}$. We remark that $\nu_{V_l}$ is $H$-invariant for all $l\in \{1,\ldots,\tau\}$.

Since $\mathrm{LO}({a_1^1}^\perp)$ is compact, by Theorem \ref{thm3}, if we prove that $\nu$ is $H$-ergodic, then we have the result. Indeed, then $\nu$ is an extreme point of the $H$-invariant measures and all the $\nu_{V_l}$ are equal to $\nu$, thus for any $l$ we have
\begin{align*}
\mu\left(V_{(a^k_1,\ldots, a^k_{i_k})} \cap V_l\right) &= \mu_{V_l} \left(V_{(a^k_1,\ldots, a^1_{i_k})}\right) \mu(V_l)
\\ &= \nu_{V_l} \left(V^r_{(a^k_1,\ldots, a^1_{i_k})}\right) \mu(V_l)
\\ &= \nu \left(V^r_{(a^1_k,\ldots, a^1_{i_k})}\right) \mu(V_l)
\\ &= \frac{1}{i_k !} \frac{1}{\displaystyle\prod_{j=1}^{k-1} i_j !}
\end{align*}
and this equality finishes the induction.

It only remains to prove the ergodicity of $\nu$. The following lemma will allow us to conclude.

\begin{lemma} \label{Lem:ErgoInf} Let $K$ be a group acting on a set $\mathcal{N}$ with no finite orbits. Denote $\mathrm{LO}(\mathcal{N})$ the space of linear orderings on $\mathcal{N}$. Then the uniform measure $\lambda$ on $\mathrm{LO}(\mathcal{N})$ is $K$-ergodic. 
\end{lemma}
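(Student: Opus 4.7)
The plan is to use the classical ``approximate and shift to disjoint support'' argument for ergodicity of product-type measures, with Neumann's Lemma producing the disjointness. First I would give an explicit description of $\lambda$ on cylinders generating the Borel $\sigma$-algebra: for a finite $\Gamma\subset\mathcal{N}$ and a linear order $\prec$ on $\Gamma$, set $C_{\Gamma,\prec}=\{<\,\in\mathrm{LO}(\mathcal{N}) : <|_\Gamma=\prec\}$; then $\lambda(C_{\Gamma,\prec})=1/|\Gamma|!$. The crucial observation is that cylinders with disjoint supports are independent under $\lambda$: for disjoint finite $\Gamma_1,\Gamma_2\subset\mathcal{N}$ with $|\Gamma_i|=m_i$ and orderings $\prec_i$, the number of linear orderings of $\Gamma_1\cup\Gamma_2$ restricting to $\prec_1$ and $\prec_2$ equals the interleaving count $\binom{m_1+m_2}{m_1}$, hence
\[
\lambda(C_{\Gamma_1,\prec_1}\cap C_{\Gamma_2,\prec_2})=\frac{\binom{m_1+m_2}{m_1}}{(m_1+m_2)!}=\frac{1}{m_1!\,m_2!}=\lambda(C_{\Gamma_1,\prec_1})\,\lambda(C_{\Gamma_2,\prec_2}).
\]

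Now let $A\subset\mathrm{LO}(\mathcal{N})$ satisfy $\lambda(A\triangle k\cdot A)=0$ for every $k\in K$, and fix $\varepsilon>0$. By regularity of $\lambda$ and the fact that finite unions of cylinders form an algebra generating the Borel $\sigma$-algebra, there exist a finite $\Gamma\subset\mathcal{N}$ and a clopen set $B$ depending only on the ordering restricted to $\Gamma$ such that $\lambda(A\triangle B)<\varepsilon$. Apply Theorem \ref{Thm:Neu} to the $K$-action on $\mathcal{N}$ (with both finite sets equal to $\Gamma$) to obtain $h\in K$ with $h\cdot\Gamma\cap\Gamma=\varnothing$. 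Since $h\cdot B$ is a cylinder supported on $h\cdot\Gamma$, disjoint from $\Gamma$, the independence computation above together with the $K$-invariance of $\lambda$ gives
\[
\lambda(B\cap h\cdot B)=\lambda(B)\,\lambda(h\cdot B)=\lambda(B)^2.
\]

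To conclude, note that $\lambda(A\cap h\cdot A)=\lambda(A)$ since $\lambda(A\triangle h\cdot A)=0$, and that $\lambda(h\cdot A\triangle h\cdot B)=\lambda(A\triangle B)<\varepsilon$ by invariance. Therefore
\[
|\lambda(A\cap h\cdot A)-\lambda(B\cap h\cdot B)|<2\varepsilon,
\]
while $|\lambda(A)-\lambda(B)|<\varepsilon$ yields $|\lambda(A)^2-\lambda(B)^2|<2\varepsilon$ (using $\lambda(A)+\lambda(B)\leq 2$). Combining, $|\lambda(A)-\lambda(A)^2|<4\varepsilon$, and since $\varepsilon$ was arbitrary, $\lambda(A)=\lambda(A)^2$, so $\lambda(A)\in\{0,1\}$. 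The only substantive point is the disjoint-support independence of cylinders, which is a short combinatorial count; the remainder is the standard application of Neumann's Lemma to translate an approximating cylinder off its own support.
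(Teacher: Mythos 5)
Your proposal is correct and follows essentially the same route as the paper's proof: approximate the invariant set $A$ by a cylinder supported on a finite set, apply Neumann's Lemma to move that support off itself, use the interleaving count $\binom{m_1+m_2}{m_1}/(m_1+m_2)!=1/(m_1!\,m_2!)$ to get independence of disjointly supported cylinders under the uniform measure, and close with the standard $4\varepsilon$ triangle-inequality estimate. The only cosmetic difference is that you make explicit the value of $\lambda$ on cylinders and the reduction of a general clopen $B$ to a disjoint union of single-order cylinders, which the paper leaves implicit.
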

\begin{proof}

Take $A$ a Borel subset of $\mathrm{LO}(\mathcal{N})$ such that for all $g\in K$ $\lambda(A\triangle g\cdot A)=0$. Let $\varepsilon >0$. There is a cylinder, i.e. a set depending only on a finite set of $\mathcal{N}$, $B=B(b_1,\ldots , b_k)$ such that $\mu(B\triangle A ) \leq \varepsilon$. 
Using Neumann's Lemma, we get that there exists $g\in K$ such that
$ \{b_1,\ldots,b_k\} \cap g\cdot \{b_1,\ldots,b_k\} = \varnothing$.

Moreover, since $\nu$ is uniform, the orderings of two disjoint sets of points are independent. Indeed, taking $(a_1,\ldots, a_i)$ and $(c_1,\ldots, c_{i'})$ two disjoint families of points. Note that  $\lambda(a_1<\cdots < a_i \cap c_1<\cdots < c_{i'} )  $ is equal to the number of way to insert $(c_1,\ldots, c_{i'})$ in $(a_1,\ldots,a_i)$ respecting both orderings times the weight of a given ordering of $(a_1,\ldots, a_i, c_1,\ldots,c_{i'})$. We therefore have

\begin{equation*}
\begin{split}
\lambda(a_1<\cdots < a_i \cap c_1<\cdots < c_{i'} )  
 &= \binom{i+i'}{i} \frac{1}{(i+i')!}
\\ &= \frac{1}{i!}\frac{1}{i'!}.
\end{split}
\end{equation*}

This means that $B$ and $g\cdot B$ are independent. We can now write:
\begin{align*}
\left| \lambda(A) - \lambda(A)^2 \right| = & \left| \lambda(A \cap g\cdot A) - \lambda(A)^2 \right| \\
\leq & \left| \lambda(A\cap g\cdot A) - \lambda(B\cap g\cdot A) \right| + \left| \lambda(B \cap g\cdot A) - \lambda(B\cap g\cdot B) \right| \\
& + \left| \lambda(B\cap g\cdot B) - \lambda(B)^2 \right| + \left| \lambda(B)^2 - \nu(A)^2 \right| \\
\leq& 4\varepsilon.
\end{align*}
The last inequality comes from the following inequalities 
\begin{align*}
&\left| \lambda(A\cap g\cdot A) - \lambda(B\cap g\cdot A) \right| \leq \lambda((A \triangle B) \cap g\cdot A) \leq \varepsilon , \\
&\left| \lambda(B \cap g\cdot A) - \lambda(B\cap g\cdot B) \right| \leq \lambda(g\cdot(A \triangle B) \cap B) \leq \varepsilon ,\\
&\lambda(B\cap g\cdot B) = \lambda(B)^2 \\
&\text{and} \\
&\left| \lambda(B)^2 - \lambda(A)^2 \right| = (\lambda(A)+\lambda(B))|\lambda(A)-\lambda(B)| \leq 2 \varepsilon.
\end{align*}
This proves that $\lambda$ is $K$-ergodic.

\end{proof}

We only have to prove that $H$ has no finite orbits on $(a_1^1)^\perp$. It suffices to remark that for all $a\in \s$, $(u_1,\ldots, u_i)\in \s$, there are infinitely many $b\in a^\perp$ such that $a\rightarrow u_j$ iff $b\rightarrow u_j$ for all $1\leq j\leq i$. 

Indeed, take $k\in \N$. Consider the structure $((a_1,\ldots,a_k,v_1,\ldots , v_i),\to)$, where $a_l\perp a_j$, $a_l \to v_k$ iff $a \to u_k$ and $v_m \to v_{m'}$ iff $u_m\to u_{m'}$ for all $l,j\leq k$ and $ m,m'\leq i$. It is obvious that this structure verifies the parity condition. Therefore in $\s$ we can find $k$ copies of $a$ in its column for any $k>0$.

This is enough to conclude that $\nu$ is indeed $H$-ergodic.

\qed

\subsection{Proof of Theorem \ref{thm1}}

In what follows, we will show that 

$$ \mu\left(U \cap V \right) = \mu\left(U\right) \mu\left( V \right) $$
for all $U=U_{(x_i)_{i=1}^n,  (\varepsilon_i^j)_{1\leq i <j \leq n}}$ and $V= V_{(a^1_1,\ldots, a^1_{i_1}),\ldots,(a^k_1,\ldots, a^k_{i_k})}$. It will follow that $\mu=\mu_0$.

Let us take a certain set $\{x_1,\ldots,x_n\}$ where none of the $x_i$ are in the same column. We denote $m$ the number of sets $U$ as above associated to this family. 
We consider $(U_i)_{i=1}^{m}$ the disjoint sets of $\mathcal{M}$ corresponding to the ways of defining a relation $R$ and an order on the columns $x_1^\perp,\ldots, x_n^\perp$, i.e. $U_i = U_{(x_{\sigma(i)})_{i=1}^n, \varepsilon}$ for some $\sigma\in S_n$ and $\varepsilon \in 2^{n \choose 2}$. Proposition \ref{Prop1} tells us that:
$$  \forall i,j \in \{1,\ldots, m \}, \ \mu (U_i) = \mu (U_j) .$$

We remark that this quantity is $\frac{1}{m}$. We now define, for all $i \in \{1,\ldots, m \}$,
\begin{equation*}
\mu_{U_i}(\cdot) = \frac{\mu\left(\cdot \cap U_i\right)}{\mu\left(U_i\right)}.
\end{equation*}

This is the conditional probability of $\mu$ given $U_i$. Denote $H$ the subgroup of $G$ that stabilizes $x_i^\perp$ for all $1\leq i \leq n$ and each $\sim_{x_j^\perp}$-equivalence class in $x_i^\perp$ for $i\neq j$. Remark that $H$ stabilizes $U_i$, by construction, hence $\mu_{U_I}$ is $H$-invariant.

A simple but fundamental remark is that since $\displaystyle \bigsqcup_{i=1}^m U_i = \mathcal{M}$ and all the $U_i$ have the same measure under $\mu$, we have

$$ \mu = \frac{1}{m} \sum_{i=1}^m \mu_{ U_i}. $$

Let $\mathrm{LO}_p(\s)$ denote the space of partial orders that are total on each column and do not compare elements of different columns. There is a restriction map from $\mathcal{M}$ to $\mathrm{LO}_p(\s)$. We consider $\lambda$ the pushfoward of $\mu$ on $\mathrm{LO}_p(\s)$ by this map. Similarly, we consider $\lambda_{U_i}$ the pushfoward of $\mu_{U_i}$ on $\mathrm{LO}_p(\s)$. We have
$$ \lambda = \frac{1}{m} \sum_{i=1}^m \lambda_{ U_i}. $$

The rest of the proof is similar to the proof of Proposition \ref{Prop2}: we prove that $\lambda$ is $H$-ergodic. Take $A$ a Borel subset of $\mathrm{LO}_p(\s)$ such that for all $h\in H$, $\lambda(A \triangle h\cdot A)=0$. For any $\varepsilon > 0$, there is a cylinder $B$ that depends only on finitely many points $(b_1,\ldots,b_k)$ such that $\lambda(A\triangle B)\leq \varepsilon$. We now want to find an element $g\in H$ such that  $B$ and $g\cdot B$ are $\lambda$-independent.

Take $\mathbf{A}\in \mathcal{S}$, then consider $\mathbf{A}'$ as a structure that contains disjoints copies of $\mathbf{A}$ that we call $\mathbf{A}_1$ and $\mathbf{A}_2$. We impose that all edges between elements of $\mathbf{A}_1$ and elements of $\mathbf{A}_2$ go from $\mathbf{A}_1$ to $\mathbf{A}_2$. Necessarily, $\mathbf{A}'\in \mathcal{S}$, so in $\s$ there are copies of any finite substructure that lives in a disjoint set of columns. Therefore, there is an element $g\in G$ such that $ (b_1,\ldots , b_k ) $ and $g\cdot ( b_1, \ldots,b_k)$ are in disjoint sets of columns. It is easy to see by Proposition \ref{Prop2} that $B$ and $g\cdot B$ are $\lambda$-independent.

 Just as in the proof of Proposition \ref{Prop2}, we have:
\begin{align*}
\left| \lambda(A) - \lambda(A)^2 \right| = & \left| \lambda(A \cap g\cdot A) - \lambda(A)^2 \right| \\
\leq & \left| \lambda(A\cap g\cdot A) - \lambda(B\cap g\cdot A) \right| + \left| \lambda(B \cap g\cdot A) - \lambda(B\cap g\cdot B) \right| \\
& + \left| \lambda(B\cap g\cdot B) - \lambda(B)^2 \right| + \left| \lambda(B)^2 - \lambda(A)^2 \right| \\
\leq& 4\varepsilon.
\end{align*}
Thus $\lambda(A)\in \{0,1\}$.

Since $\mathrm{LO}_p(\s)$ is compact, we have the result: $\lambda$ is an extreme point of the $H$-invariant measures and all the $\lambda_{U_i}$ are equal. Therefore we have,
\begin{align*}
\mu(V \cap U_i)&= \mu_{U_i}(V) \mu(U_i)
\\ &= \lambda_{U_i}(V) \mu(U_i)
\\ &= \lambda(V) \mu(U_i)
\\&=\mu(V) \mu(U_i)
\end{align*}
for all $i\in \{1,\ldots,m\}$, and $V=V_{(a^1_1,\ldots, a^1_{i_1}),\ldots,(a^k_1,\ldots, a^k_{i_k})}$. This finishes the proof of Theorem \ref{thm1}.

\bibliographystyle{amsalpha}
\bibliography{Semigeneric}

\providecommand{\bysame}{\leavevmode\hbox to3em{\hrulefill}\thinspace}
\providecommand{\MR}{\relax\ifhmode\unskip\space\fi MR }
\providecommand{\MRhref}[2]{%
  \href{http://www.ams.org/mathscinet-getitem?mr=#1}{#2}
}
\providecommand{\href}[2]{#2}
\begin{thebibliography}{JLNW14}

\bibitem[AKL12]{AKL}
O.~Angel, A.~S. Kechris, and R.~Lyons, \emph{Random orderings and unique
  ergodicity of automorphism groups}, J. Eur. Math. Soc. (JEMS) \textbf{16}
  (2012), no.~10, 2059--2095. \MR{3274785}

\bibitem[Cam99]{Cam}
P.~J. Cameron, \emph{Permutation groups}, Cambridge University Press, 1999.

\bibitem[Che98]{Cher}
G.~L. Cherlin, \emph{The classification of countable homogeneous directed
  graphs and countable homogeneous n-tournaments}, Mem. Amer. Math. Soc 131
  (1998), no.~621.

\bibitem[Ell69]{Ellis}
R.~Ellis, \emph{Lectures on topological dynamics}, W. A. Benjamin, Inc., New
  York, 1969. \MR{0267561}

\bibitem[Gut05]{Gut}
A.~Gut, \emph{Probability: A graduate course}, Springer, 2005.

\bibitem[GW02]{GW}
E.~Glasner and B.~Weiss, \emph{Minimal actions of the group {$\mathbb
  {S}(\mathbb{Z})$} of permutations of the integers}, Geom. Funct. Anal.
  \textbf{12} (2002), no.~5, 964--988. \MR{1937832}

\bibitem[Hod93]{Hodges}
W.~Hodges, \emph{Model theory}, Encyclopedia of Mathematics and its
  Applications, vol.~42, Cambridge University Press, Cambridge, 1993.
  \MR{1221741}

\bibitem[JLNW14]{JLNVTW}
J.~Jasi\'nski, C.~Laflamme, L.~{Nguyen Van Th\'e}, and R.~Woodrow, \emph{Ramsey
  precompact expansions of homogeneous directed graphs}, Electron. J. Combin.
  \textbf{21} (2014), no.~4, Paper 4.42, 31. \MR{3292279}

\bibitem[KPT05]{KPT}
A.~S. Kechris, V.~G. Pestov, and S.~Todorcevic, \emph{Fra\"\i ss\'e limits,
  {R}amsey theory, and topological dynamics of automorphism groups}, Geom.
  Funct. Anal. \textbf{15} (2005), no.~1, 106--189. \MR{2140630 (2007j:37013)}

\bibitem[Phe01]{Phelps}
R.~R. Phelps, \emph{Lectures on {C}hoquet's theorem}, second ed., Lecture Notes
  in Mathematics, vol. 1757, Springer-Verlag, Berlin, 2001. \MR{1835574}

\bibitem[PSar]{SP}
M.~{Pawliuk} and M.~{Soki\'c}, \emph{{Amenability and unique ergodicity of
  automorphism groups of countable homogeneous directed graphs}}, Ergodic
  Theory and Dynamical Systems (to appear).

\bibitem[Tsa14]{Tsan}
T.~Tsankov, \emph{Groupes d'automorphismes et leurs actions}, Habilitation
  memoir, 2014.

\bibitem[Wei12]{Weiss}
B.~Weiss, \emph{Minimal models for free actions}, 249--264. \MR{2931921}

\end{thebibliography}

\end{document}